\renewcommand{\leq}{\leqslant}
\renewcommand{\geq}{\geqslant}
\newcommand{\seq}{\subseteq}
\newcommand\restr[2]{{
  \left.\kern-\nulldelimiterspace 
  #1 
  \vphantom{\big|} 
  \right|_{#2} 
  }}
\newdimen\rh@wd
\newdimen\rh@hta
\newdimen\rh@htb
\newbox\rh@box
\def\rh@measure#1{\setbox\rh@box=\hbox{$#1$}\rh@wd=\wd\rh@box \rh@hta=\ht\rh@box}
\def\widecheck#1{\rh@measure{#1}%
  \setbox\rh@box=\hbox{$\widehat{\vrule height \rh@hta width\z@ \kern\rh@wd}$}%
  \rh@htb=\ht\rh@box \advance\rh@htb\rh@hta \advance\rh@htb\p@
  \ooalign{$\vrule height \ht\rh@box width\z@ #1$\cr
           \raise\rh@htb\hbox{\scalebox{1}[-1]{\box\rh@box}}\cr}}
\newtheorem{theorem}{Theorem}[section]
\newtheorem{lemma}[theorem]{Lemma}
\newtheorem{claim}[theorem]{Claim}
\theoremstyle{definition}
\newtheorem{definition}[theorem]{Definition}
\newtheorem{remark}[theorem]{Remark}
\newtheorem{example}[theorem]{Example}
\newtheorem*{notation}{Notation}
\newcommand{\R}{\mathbb{R}}
\newcommand{\Z}{\mathbb{Z}}
\newcommand{\N}{\mathbb{N}}
\newcommand{\card}{\times} 
\DeclareMathOperator{\Spec}{{\rm Spec}}
\DeclareMathOperator{\MinS}{{\rm Min}}
\DeclareMathOperator{\MaxS}{{\rm Max}}
\newcommand{\m}{{\mathfrak{m}}} 
\renewcommand{\a}{{\mathfrak{a}}} 
\renewcommand{\b}{{\mathfrak{b}}} 
\newcommand{\p}{{\mathfrak{p}}} 
\newcommand{\q}{{\mathfrak{q}}} 
\DeclareMathOperator{\VV}{\mathbb{V}} 
\DeclareMathOperator{\Ss}{\mathbb{S}} 
\DeclareMathOperator{\Pol}{{\rm Pol}} 
\DeclareMathOperator{\pPol}{\Pol_{\mathrm{p}}} 
\DeclareMathOperator{\C}{\rm C}
\DeclareMathOperator{\K}{\rm K} 
\DeclareMathOperator{\Ba}{\mathscr{B}} 
\DeclareMathOperator{\Cp}{\mathrm{Cp}} 
\DeclareMathOperator{\PH}{\mathscr{P}}
\newcommand{\up}[1]{{\uparrow}{#1}}
\newcommand{\orth}[2]{{{#1}{\perp}{#2}}}
\newcommand{\pperp}{{{\perp}{\perp}}}
\newcommand{\U}{{\mathbf U}} 
\title[Projectable hulls and minimal spectra]{From Freudenthal's Spectral Theorem   to projectable hulls of unital Archimedean lattice-groups, through compactifications of  minimal spectra}
\author[R. N. Ball]{Richard N. Ball}
\author[V. Marra]{Vincenzo Marra}
\author[D. McNeill]{Daniel McNeill}
\author[A. Pedrini]{Andrea Pedrini}
\address[R. N. Ball]{Department of Mathematics,
University of Denver,
Denver CO 80112,
U.S.A.}
\address[V. Marra, A. Pedrini]{Dipartimento di Matematica {\sl Federigo Enriques}, Universit\`a degli Studi di Milano, via Cesare Saldini 50, 20133 Milano, Italy.}
\address[D. McNeill]{Dipartimento di Scienze Teoriche e Applicate, Universit\`a degli Studi dell'Insubria, Via Mazzini 5, 21100 Varese, Italy.}
\email[R. N. Ball]{rball@du.edu}
\email[V. Marra, A. Pedrini]{\{vincenzo.marra, andrea.pedrini\}@unimi.it}
\email[D. McNeill]{danmcne@gmail.com}
\thanks{2010 {\it Mathematics Subject Classification.
}
Primary: 06F20. Secondary: 54D35.}
\keywords{Freudenthal's Spectral Theorem, Riesz space, Lattice-ordered group, Archimedean property, Yosida representation, Principal projection property, Projectable hull, Spectral space, Maximal ideal, Minimal ideal, Zero-dimensional space, Compactification.}
\begin{document}
\maketitle

\begin{abstract}We use a landmark result in the theory of Riesz spaces --- Freudenthal's 1936 Spectral Theorem --- to canonically represent any Archimedean  lattice-ordered  group $G$ with a strong unit  as a (non-separating) lattice-group of real valued continuous functions on  an appropriate $G$-indexed  zero-di\-men\-sional compactification $w_{G}Z_{G}$ of  its space  $Z_{G}$ of \emph{minimal} prime ideals. The two further ingredients needed to establish  this representation are the    Yosida representation of $G$ on its space $X_{G}$ of \emph{maximal} ideals, and the well-known continuous surjection of $Z_{G}$ onto $X_{G}$. We then establish our main result by showing that the inclusion-minimal extension of this representation of $G$ that separates the points of $Z_{G}$ --- namely, the sublattice subgroup of $\C{(Z_{G})}$ generated by the image of $G$ along with all  characteristic functions of clopen (closed and open) subsets of $Z_{G}$ which are determined by elements of $G$ --- is precisely the classical projectable hull of $G$. Our main result thus reveals a fundamental relationship between projectable hulls and minimal spectra, and  provides  the most direct and explicit construction of projectable hulls to date. Our  techniques do require the presence of a strong unit.
\end{abstract}

\section{Introduction}\label{s:intro}
In 1936, Freudenthal proved his well-known Spectral Theorem  \cite{Freudenthal1936} for Riesz spaces (real linear spaces with a compatible lattice order) with motivations coming from the theory of integration. (See \cite[40.2]{LuxZaan71} for a handbook treatment.)

 In its basic version, the theorem asserts that any element of a Riesz space $R$ with a strong unit $u$ and the principal projection property may be uniformly approximated,  in the norm that $u$ induces on $R$, by abstract characteristic functions --- ``components of  the unit  $u$''.  See Subsection \ref{ss:pol} for more details. Freudenthal's theorem led to a considerable amount of research on Riesz spaces and their generalisations, the lattice-ordered Abelian groups that concern us here, and which we  call \emph{$\ell$-groups} for short. (For background we refer to \cite{LuxZaan71, Darnel, Glass}.) One main line of research concentrated on extending one given structure $G$ to a minimal completion  that enjoys the principal projection property, where Freudenthal's theorem therefore applies. Such an  extension  is called the \emph{projectable hull} of $G$; please see Subsection \ref{ss:esspol} for details.

In 1973, Conrad \cite{Conrad73} proved the existence and  uniqueness of  projectable hulls of (a class of lattice-groups more general than) Archimedean $\ell$-groups, using his previous construction in \cite{Conrad71} of the \emph{essential closure}  of such an $\ell$-group --- the largest extension of the structure that is \emph{essential}, in the sense recalled in Subsection \ref{ss:esspol}. At about the same time, Chambless \cite{Chambless72} exhibited a different construction of the projectable hull based on direct limits; cf.\ also Bleier's construction in \cite{Bleier74}. Here we present a new construction of the projectable hull of an Archimedean $\ell$-group equipped with a strong order unit $u$ --- an element whose multiples eventually dominate any other element in the $\ell$-group --- that does not use direct limits, nor essential closures. Our construction exposes instead the intimate connection between projectable hulls and   zero-dimensional compactifications of  spectral spaces of minimal prime ideals. Closing the circle of ideas beginning with Freudenthal, to establish this connection we will need to apply his Spectral Theorem at a key step of the construction. We now recall some standard notions, and introduce notations that will remain in force throughout the paper.

Throughout,  all lattice-ordered groups are Abelian, and referred to simply as $\ell$-groups for short. We write $\U$ for the category whose typical object is a pair $(G,u)$, where $G$ is an  $\ell$-group that is \emph{Archimedean} --- whenever $0\leq ng\leq h$ for $h,g \in G$ and all integers $n\geq 1$, then $g=0$ --- 
equipped with a distinguished (\emph{strong order}) \emph{unit} $u\in G$ --- an element $u\geq 0$   such that for all $g\in G$ there is an integer $n\geq 1$ such that $nu\geq g$. As morphisms, we take the   lattice-group homomorphisms (\emph{$\ell$-homomorphisms}) that are \emph{unital}, i.e.\ preserve the distinguished units.  It will transpire that our  techniques do require the existence of a strong unit, as opposed to the existence of a weak unit. Recall that a \emph{weak \textup{(}order\textup{)} unit} of $G$ is an element $w\in G^{+}$ such that for each $g \in G$,  $w\wedge |g|=0$ implies  $g=0$. Here,  $|g|:=(g\vee 0) + (-g\vee 0)$ is the \emph{absolute value} of $g$.

By an \emph{ideal} in an $\ell$-group we mean, as usual, a sublattice subgroup $I$ of $G$ that is \emph{order-convex}: whenever $a,c\in I$, $b\in G$, and $a\leq b\leq c$, then $b\in G$. Ideals are exactly the kernels of (unital) $\ell$-homomorphisms, i.e.\ the morphisms in the unrestricted category of abelian $\ell$-groups, and the usual homomorphism theorems hold. An ideal $\p$ of $G$ is \emph{prime} if, and only if,  it is proper ($\p\neq G$) and the quotient $\ell$-group $G/\p$ is totally ordered. A prime ideal is \emph{maximal}  if it is   inclusion-maximal --- equivalently, if $G/\p$ is non-trivial and \emph{simple}, i.e.\ it has no non-trivial proper ideals. Ideals that are inclusion-maximal are automatically prime. A prime ideal is  \emph{minimal} if it is inclusion-minimal. For any unital $\ell$-group  $(G,u)$, we denote by $\MaxS{G}$ the collection of its maximal (prime) ideals, and by $\MinS{G}$ the collection  of its minimal prime ideals. 
We  topologize both $\MaxS{G}$ and $\MinS{G}$  using the \emph{spectral}, or \emph{Zariski} topology. The closed sets for this topology are given by  subsets of the form
\begin{align*}
\VV_{M}{(A)}:=\{\mathfrak{m}\in \MaxS{G} \,\mid\, \mathfrak{m}\supseteq A\} 
\end{align*}
and
\begin{align*}
\VV_{m}{(A)}:=\{\mathfrak{p}\in \MinS{G} \,\mid\, \mathfrak{p}\supseteq A\},
\end{align*}
as $A$ ranges over arbitrary subsets of $G$. The resulting topological spaces  are called the \emph{maximal} and \emph{minimal prime spectrum} of $G$, respectively. The topology 
on $\MaxS{G}$ is also called the \emph{hull-kernel} topology, because  it agrees with the classical  hull-kernel topology for rings of continuous functions \cite{GillmanJerison76}, \textit{mutatis mutandis}. Accordingly, we  call $\VV_M{(A)}$ (or $\VV_m{(A)}$)  the \emph{zero set} of $A$ (on the appropriate space), and its complement --- denoted by $\Ss_{M}{(A)}$ (or $\Ss_{m}{(A)}$) --- the \emph{support} of $A$. It is known \cite[10.2.1]{BKW} that the collections 
\begin{equation*}
 \{\VV_{M}(\{g\})\}_{g\in G}\qquad\text{and}\qquad\{\VV_{m}(\{g\})\}_{g\in G}
\end{equation*}
form a closed base for $\MaxS{G}$ and $\MinS{G}$, respectively. Throughout we write $\VV_{M}{(g)}$ in place of $\VV_{M}{(\{g\})}$, and similarly for $\VV_{m}$, $\Ss_{M}$, and $\Ss_{m}$. 

The space $\MaxS{G}$ is a Hausdorff space that is compact precisely because of the assumption that $G$ has a (strong) unit $u$; see \cite[10.2.5]{BKW}. The space $\MinS{G}$ is a Hausdorff zero-dimensional space that need not be compact  \cite[10.2.1]{BKW}. Whether it is or not has nothing to do with the existence of a strong unit, but rather with  complementation properties of the lattice $G^{+}:=\{g\in G \mid g\geq 0\}$, the \emph{positive cone} of $G$; see Section \ref{s:repG}.  

\begin{notation}For the rest of this paper, we let $(G,u)$ denote a $\U$-object, and set
\begin{align*}
 X_{G}&:=\MaxS{G},\\
 Z_{G}&:=\MinS{G}.
\end{align*} 
\end{notation}
If $X$ is any topological space, always at least Tychonoff, we write $\C{(X)}$ for the $\ell$-group of continuous functions $X\to\R$ under pointwise operations. If $X$ is compact, the function $1_{X}$ constantly equal to $1$ over $X$ is a strong unit of $\C{(X)}$ by the Extreme Value Theorem. We  always tacitly consider $\C{(X)}$ endowed with the distinguished unit $1_{X}$, and hence as a $\U$-object when $X$ is compact Hausdorff. The classical \emph{Yosida representation} \cite{Yosida41} of $(G,u)$ yields a canonical unital lattice-group embedding  $\,\widehat{\cdot}\colon G \hookrightarrow \C{(X_G)}$; details are recalled in Subsection \ref{ss:Yosida}.

It is well known that $Z_{G}$ is canonically thrown onto $X_G$, as follows.  Given $\a \in Z_{G}$, a standard argument \cite[27.4]{LuxZaan71} shows that, by virtue of the presence of the (strong) unit $u$,   there exists at least one $\m_{\a} \in X_G$ such that $\a \seq \m_{\a}$. Since the prime ideals of $G$ form a \emph{root system} under set-theoretic inclusion \cite[2.4.3]{BKW} --- that is, the set of prime ideals containing any given prime ideal is linearly ordered --- such an $\m_{\a}$ must be unique; in other words, the set $\up{\a}\cap X_G$ is a singleton, where $\up{\a}:=\{\b\seq G \mid \a \seq \b, \b \text{ a prime ideal}\}$. Hence there is a function
\begin{align}\label{eq:lambda}
\lambda \colon Z_{G}\twoheadrightarrow X_{G}
\end{align}
defined by
\begin{align}\label{eq:action}
\a \in Z_{G}\overset{\lambda}{\longmapsto} \m_{\a} \in X_G.
\end{align}
By \cite[10.2.5]{BKW}, the map $\lambda$ is continuous, and is a surjection by the standard fact that each prime ideal contains a minimal prime ideal \cite[2.4.5]{BKW}. 

\begin{remark}The strong unit is crucial here. There exist non-trivial Archimedean  (and even Dedekind-complete) $\ell$-groups with a weak unit and no maximal ideal at all; see \cite[27.8]{LuxZaan71}. 
Thus, while the  Yosida representation has an important extension to $\ell$-groups with a weak unit \cite{Yosida42, HagerRobertson77}, the existence of the map (\ref{eq:lambda}), a key ingredient to our construction, does require a strong unit. A generalisation of our results to the case of  weak  units would surely be of interest, but it would require substantial modifications. \end{remark}

Composition of the map $\lambda$ with the Yosida representation of $G$ embeds $G$ as a unital sublattice-subgroup (\emph{$\ell$-subgroup}) into 
$\C{(Z_{G})}$: one sends $g\in G$ to $\widehat{g}\circ \lambda \colon Z_{G}\to\R$. The assignment is injective because $\lambda$ is surjective.
In  Section \ref{s:repG}  this observation is considerably strengthened. It turns out that $G$ determines a specific zero-dimensional compactification of its minimal spectrum which we denote $w_{G}Z_{G}$.  The clopen subsets of $w_{G}{Z_{G}}$, we will see, are (finitely)
 generated from the clopen subsets of $Z_{G}$ of the form $\VV_m(g)$. (Please see   Section \ref{s:repG} for details on this compactification.) We prove
in Theorem \ref{t:min-rep} that $G$ embeds as a unital $\ell$-subgroup of $\C{(w_{G}{Z_{G}})}$.    We will see that this stronger embedding of $G$ is in fact granted by Freudenthal's Spectral Theorem. Now, by the  Yosida theory (see again Subsection \ref{ss:Yosida}), the image of $G$ in $\C{(w_{G}{Z_{G}})}$  does not separate 
the points of the base space, unless $X_G$ and $w_{G}{Z_{G}}$ are homeomorphic. We can however consider a minimal extension of the image of $G$ inside $\C{(w_{G}{Z_{G}})}$ which separates the points. Indeed, since $w_{G}{Z_{G}}$ is zero-dimensional, there is a canonical such extension: we must adjoin to 
the image of $G$ all characteristic functions of clopen  subsets of $w_{G}{Z_{G}}$. We thereby obtain a unital embedding 
\begin{align}\label{eq:phintro}
\pi_{G} \colon G \hookrightarrow \PH{(G)},
\end{align}
where $\PH{(G)}$ denotes the unital $\ell$-subgroup of $\C{(w_{G}Z_{G})}$ generated by the  representation  of $G$ into  $\C{(w_{G}Z_{G})}$, together with all \emph{characteristic functions} $w_{G}Z_{G}\to \R$ --- the continuous maps with range contained in $\{0,1\}$. We now have the homeomorphism $\MaxS{\PH{(G)}}\cong w_G{Z_{G}}$. In Theorem \ref{t:decomposition} we show that the elements of $\PH{(G)}$ may be characterised amongst elements of $\C{(w_{G}{Z_{G}})}$ as those functions with the property that, for an appropriate finite partition of $w_{G}Z_{G}$ into clopens, they agree with the image of some element of $G$ locally at each clopen. Building on this 
we finally show in Theorem \ref{t:ph} that \eqref{eq:phintro} is the projectable hull of $(G,u)$, thus obtaining our main result. Summarising, we prove the existence of the projectable hull  of any  $\U$-object $(G,u)$ by exhibiting it as a natural substructure of $\C{(w_{G}Z_{G})}$, namely, $\PH{(G)}$.

\smallskip
Several intermediate  results in this paper admit a fuller development  of considerable potential  interest. We focus here on the proof of our main Theorem \ref{t:ph}, and postpone further results to future work.

\section{Preliminaries}\label{s:back}
\subsection{Polars and projection properties}\label{ss:pol}For the standard notions that we recall in this subsection, see \cite[Ch.\ 3, 6, and 11]{BKW}, together with \cite[Ch.\ 4, \S 24]{LuxZaan71}.
Given any $\ell$-group $A$, the elements $x,y \in A$ are \emph{orthogonal}, written $\orth{x}{y}$, if 
$|x|\wedge |y|=0$. For $T \seq A$, we set 
\begin{align*}
T^\perp:=\{x \in A\mid \orth{x}{y} \text{ for all } y \in T\};
\end{align*}
we  write $T^\pperp$ instead of $(T^\perp)^\perp$, and $x^\perp$ instead of $\{x\}^\perp$ for $x \in A$. 
A subset $S\seq A$ is a \emph{polar} if it satisfies $S=S^\pperp$, or equivalently, if there exists $T\subseteq A$ such that $S=T^\perp$. When necessary, we will denote polars computed in $A$ by $T^{\perp_{A}}$. We write
$\Pol{A}$ to denote  the set of polars of $A$. Under the inclusion order,  $\Pol{A}$ is a complete distributive lattice with  $A=0^\perp$ as maximum,  $\{0\}=A^\perp=0^\pperp$ as minimum, meets given by intersections, and joins given by $\bigvee S_i:=(\bigcup S_i)^\pperp$. It can be shown that $\Pol{A}$ is a complete Boolean algebra, with complementation given by the map $S\in\Pol{A}\mapsto S^\perp\in\Pol{A}$. In particular, for any subset $T\seq A$ we have $T^{\pperp\perp}=T^\perp$.

If $x\in A$, the set $x^\pperp$ is called the \emph{principal polar} generated by $x$. Then $x^\pperp\in\Pol{A}$, and $x^\pperp=\bigcap_{x \in S\in\Pol{A}} S$, that is, $x^\pperp$ is the inclusion-smallest polar containing $\{x\}$. We write $\pPol{A}$ to denote the set of principal polars of $A$; it is a sublattice of $\Pol{A}$, because of the identities
\begin{align}
(x\wedge y)^\pperp&=x^\pperp\cap y^\pperp\label{e:ppolmeet}\\
(x\vee y)^\pperp&=x^\pperp\vee y^\pperp,\label{e:ppoljoin}
\end{align}
which hold for each $x,y\in A^+$. Further, the minimum $0^\pperp$ of $\Pol{A}$ lies in $\pPol{A}$. However, the maximum $A=0^\perp$ of $\Pol{A}$ need not be a principal polar: in fact, this happens precisely when $A$ has a weak unit $w$, and in that case $A=w^\pperp$.  Even when $A$ has a weak unit, $\pPol{A}$ may fail to be a Boolean subalgebra of $\Pol{A}$, because the complement of a principal polar need not be principal.

An ideal $I\subseteq A$ is  \emph{closed}, or is a \emph{band}, if for each $S\subseteq I$ such that $\bigvee S$ exists in $A$, we have $\bigvee S \in I$. It can be shown that each polar is a band; for the converse, we have the important
\begin{lemma}\label{l:pol=band}An $\ell$-group $A$ is such that its polars coincide with its bands if, and only if, $A$ is Archimedean.
\end{lemma}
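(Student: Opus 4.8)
The plan is to lean on the fact, already recorded in the excerpt, that every polar is a band; this gives the inclusion $\Pol{A}\seq\{\text{bands of }A\}$ unconditionally, so the whole content of the lemma is the reverse inclusion. Since any band $B$ satisfies $B\seq B^\pperp$ while $B^\pperp$ is a polar, $B$ is a polar precisely when $B=B^\pperp$. Thus the lemma reduces to the equivalence: every band $B$ of $A$ satisfies $B=B^\pperp$ if, and only if, $A$ is Archimedean. I will prove the two implications separately.

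For the forward implication I assume $A$ Archimedean, fix a band $B$, and take $0\le f\in B^\pperp$; it suffices to show $f\in B$, since an arbitrary element of $B^\pperp$ splits into its positive and negative parts, both of which lie in the sublattice $B^\pperp$. Put $D:=\{b\in B\mid 0\le b\le f\}$. I claim $f=\bigvee D$. Clearly $f$ is an upper bound of $D$, and it is enough to show that every upper bound $h$ of $D$ with $0\le h\le f$ equals $f$ (a general upper bound $h_{0}$ being replaced by $h_{0}\wedge f$). Writing $w:=f-h\ge 0$, I show $w\in B^\perp$: given $b\in B^{+}$, set $c:=w\wedge b$; then $c\in B$ (as $0\le c\le b\in B$ and $B$ is an ideal) and $0\le c\le w\le f$, so $c\in D$ and hence $c\le h$. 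From $c\le h$, $c\le w$ and $f=h+w$ one gets by induction $nc\le f$ for all $n\ge 1$, so the Archimedean law forces $c=0$. Thus $w\wedge b=0$ for every $b\in B^{+}$, i.e.\ $w\in B^\perp$; as also $w=f-h\in B^\pperp$, we obtain $w\in B^\perp\cap B^\pperp=\{0\}$, whence $f=h$ and $f=\bigvee D$. Since $D\seq B$ and $B$ is a band, $\bigvee D=f\in B$, proving $B^\pperp\seq B$.

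For the reverse implication I argue by contraposition. Non-Archimedeanness supplies $0<g$ and $h$ with $0\le ng\le h$ for all $n\ge 1$. The heart of the argument is the classical description \cite{BKW} of the band $B_{g}$ generated by $g$: a positive element $x$ lies in $B_{g}$ exactly when $x=\bigvee_{n}(x\wedge ng)$. Suppose I have in hand an element $h'\in g^\pperp$ bounding the sequence, $ng\le h'$ for all $n$. If $h'\in B_{g}$, then $h'\wedge ng=ng$ yields $h'=\bigvee_{n} ng$; but then $g+h'=\bigvee_{n}\bigl((n+1)g\bigr)=\bigvee_{m\ge 1}mg=h'$, forcing $g=0$, a contradiction. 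Hence $h'\in g^\pperp\setminus B_{g}$, so $B_{g}\subsetneq g^\pperp$; and since $\{g\}\seq B_{g}\seq g^\pperp$ forces $B_{g}^\pperp=g^\pperp$, this exhibits $B_{g}$ as a band that is not a polar.

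It remains to produce such a pair $(g,h')$ with the bound $h'$ \emph{inside} $g^\pperp$, and this is the step I expect to be the main obstacle. The difficulty is that the given $h$ need not lie in $g^\pperp$: without a projection of $A$ onto the polar $g^\pperp$ one cannot split off the component of $h$ orthogonal to $g$, and an element of $g^\pperp$ dominating all $ng$ need not be visibly available. I would attack this by passing to $Q:=h^\pperp$ --- where $h$ is a weak unit and, by order-convexity of polars together with $0\le g\le h\in Q$, also $g\in Q$, so that $Q$ is a non-Archimedean $\ell$-group with a weak unit --- and then seeking an element $w\in Q^{+}$, infinitesimal relative to $h$, whose principal polar $w^\pperp$ still contains a bound for the multiples $nw$. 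Since a band of the polar $Q$ is again a band of $A$, computed identically, any non-polar band found inside $Q$ would descend to $A$ at once. Carrying out this selection --- guaranteeing that a bounding element can be kept within the relevant principal polar --- is the genuine difficulty, and is precisely where the full strength of non-Archimedeanness must be extracted.
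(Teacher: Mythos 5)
The paper offers no argument of its own for this lemma --- its ``proof'' is the citation \cite[11.1.10]{BKW} --- so the only real question is whether your proof is correct and complete. Your forward implication (Archimedean $\Rightarrow$ every band is a polar) is: it is exactly the classical argument, with one presentational caveat. The induction giving $nc\leq f$ does not follow from the three facts $c\leq h$, $c\leq w$, $f=h+w$ alone; at each step you need $nc\in D$ (so that $nc\leq h$, whence $(n+1)c=nc+c\leq h+w=f$), and this uses that $B$ is a subgroup and that $h$ bounds all of $D$, not just $c$. Since you set up all of these ingredients, the step is sound, but it should be said.

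The reverse implication, however, contains a genuine gap --- one you candidly flag yourself: you never produce the pair $(g,h')$ with $ng\leq h'$ for all $n$ and $h'\in g^\pperp$, and without it nothing is proved. The gap is not routine to fill. First, passing to $Q:=h^\pperp$ gains nothing: since $0\leq g\leq h$, both $g$ and $h$ already lie in $h^\pperp$, so $Q$ is merely a non-Archimedean $\ell$-group with weak unit in which the identical demand (a bound for the multiples of the \emph{small} element inside that element's principal polar) reappears verbatim. Second, for a fixed witness pair the demand can simply be unsatisfiable: in the $\ell$-group $A$ of sequences in the lexicographic $\Z\times\Z$ whose first coordinates are eventually constant, take $g$ equal to $(0,1)$ on an infinite, co-infinite set $S$ and $0$ elsewhere, and $h$ constantly $(1,0)$; then $ng\leq h$ for all $n$, but every $y\in g^\pperp$ vanishes off $S$, hence has first coordinates eventually $0$, hence dominates no tail of $\{ng\}_{n\geq 1}$. (Another choice of $g$ happens to work in this example, but you give no general recipe for making such a choice.) The standard resolution --- in essence what the cited \cite[11.1.10]{BKW} does --- is to enlarge the band rather than shrink the bound. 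Let $I_g$ be the ideal generated by $g$ and let $C$ be the band generated by the ideal $g^\perp+I_g$. Since $g^\perp\seq C$ and $g\in C$, we get $C^\perp\seq g^\pperp\cap g^\perp=\{0\}$, so $C^\pperp=A$. On the other hand $h\notin C$: by the same classical description you invoke for $B_g$, now applied to the ideal $I:=g^\perp+I_g$, membership of $h$ in $C$ would force $h=\bigvee\{y\in I^+\mid y\leq h\}$; but $h-g$ is an upper bound of that set, because any such $y$ decomposes as $y=z+w$ with $z\in(g^\perp)^+$ and $0\leq w\leq kg$ (orthogonality of $g^\perp$ and $I_g$ makes both parts positive), and then $y+g=z\vee(w+g)\leq h$, since $z\leq y\leq h$ and $w+g\leq(k+1)g\leq h$. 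This would give $h\leq h-g$, i.e.\ $g\leq 0$, a contradiction. Thus $C$ is a band that is not a polar, and no relocation of the bound $h$ into $g^\pperp$ is ever needed.
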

\begin{proof}\cite[11.1.10]{BKW}.
\end{proof}
A band $I \seq A$ is a \emph{projection band} if there is a product splitting $A \cong I \card I^\perp$. 
\begin{definition}[{Cf.\ \cite[24.8]{LuxZaan71}}]\label{d:projprop}An $\ell$-group $A$ is said to have the \emph{principal projection property}, or to be \emph{projectable}, if each principal band of $A$ is a projection band. Further, $A$ is said to have the \emph{projection property}, or to be \emph{strongly projectable}, if each band of $G$ is a projection band.
\end{definition}
We recall here a standard fact:
\begin{lemma}\label{l:pppimpliesarch}An $\ell$-group with the principal projection property must be Archimedean.  
\end{lemma}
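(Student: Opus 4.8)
The plan is to argue directly: assuming $0 \le ng \le h$ for all integers $n \ge 1$, I will deduce $g = 0$. Taking $n = 1$ shows in particular that $g \ge 0$, so the multiples $ng$ form an increasing sequence in $A^{+}$ that is bounded above by $h$.

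The engine of the proof is the classical description of projections onto principal bands. By the principal projection property, the band $B$ generated by $g$ is a projection band, giving a splitting $A \cong B \card B^{\perp}$. I would invoke the standard fact --- see \cite[\S24]{LuxZaan71} for the Riesz-space version and \cite{BKW} for the $\ell$-group formulation --- that the component of a positive element along such a principal band is computed by a Freudenthal-type supremum: for every $h \in A^{+}$ the join $\bigvee_{n}(h \wedge ng)$ exists in $A$ and coincides with the $B$-component of $h$. The only consequence I actually need is the \emph{existence} of this supremum.

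Granting existence, the conclusion is immediate. Since $ng \le h$ we have $h \wedge ng = ng$, so $s := \bigvee_{n \ge 1} ng$ exists in $A$. For each fixed $n \ge 1$ the inequality $(n+1)g \le s$ rearranges to $ng \le s - g$, so $s - g$ is itself an upper bound of $\{\,ng \mid n \ge 1\,\}$. As $s$ is the \emph{least} upper bound of this set, we obtain $s \le s - g$, whence $g \le 0$; combined with $g \ge 0$ this forces $g = 0$, which is exactly the Archimedean condition.

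I expect the sole genuine obstacle to be the structural step linking the abstract splitting $A \cong B \card B^{\perp}$ to the concrete join $\bigvee_{n}(h \wedge ng)$. This equivalence is the non-trivial core of the theory of projection bands, and it is essential: in a non-projectable $\ell$-group the relevant suprema simply fail to exist --- for instance $\bigvee_{n} n(0,1)$ has no least upper bound in $\Z \times \Z$ ordered lexicographically, even though there $(0,1)$ is dominated by $(1,0)$ at every multiple. Rather than reprove this equivalence I would cite it; once the supremum is known to exist, the remaining manipulation is purely order-theoretic and carries no further subtlety.
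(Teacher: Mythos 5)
Your proposal is correct and coincides with the paper's proof: the paper simply cites \cite[24.9]{LuxZaan71}, and the argument there is exactly yours --- use the band-projection formula $P_{B}h=\bigvee_{n}(h\wedge ng)$ to get existence of $s=\bigvee_{n}ng$, then deduce $s\leq s-g$, hence $g\leq 0$ and so $g=0$. The only point the paper adds is the (routine) observation that this vector-lattice argument carries over verbatim to $\ell$-groups, which is also implicit in your write-up.
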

\begin{proof}The (easy) proof for vector lattices given in \cite[24.9]{LuxZaan71} works for $\ell$-groups without changes.
\end{proof}
\begin{remark}\label{r:terminology}
Projection properties are a  classical topic in the theory of vector lattices, see \cite[Ch.\ 4]{LuxZaan71}. In the literature on $\ell$-groups, it is standard to call $A$ \emph{projectable} when each of its principal  polars is a cardinal summand (i.e.\ a factor of a product splitting) of $A$, and \emph{strongly projectable} when the same holds for all polars. Thus, we see from Lemmas \ref{l:pppimpliesarch} and \ref{l:pol=band} that an $\ell$-group $A$ has the  principal projection property if, and only if, it is  projectable in the present sense; and that it has the  projection property if, and only if, it is strongly projectable in the present sense. Cf.\ also \cite[7.5]{BKW}. This explains the alternative terminologies in Definition \ref{d:projprop}.  In the rest of this paper we shall use the terminology \emph{projectable}.
\end{remark}
 A \emph{component of  the unit $u$} is an element $\chi \in G$ such that $\chi\vee (u-\chi)=u$ and $\chi\wedge (u-\chi)=0$. It is well known that this  entails  the existence of a product splitting $G\cong \chi^{\perp}\times\chi^{\perp\perp}$. Conversely, if $G\cong A\times B$ in $\U$, then there is a unique  $\chi \in G$ --- namely, the image in $G$ of the unit of $B$ under the unital isomorphism $G\cong A\times B$ ---  that is a component of the unit $u$ such that $A\cong \chi^{\perp}$ and $B\cong \chi^{\perp\perp}$. We use these elementary facts  without further justification throughout.

Finally, we recall the version of Freudenthal's Spectral Theorem that we will use.
\begin{theorem}\label{t:freudenthal} Let $R$ be a Riesz space that is projectable and has a unit $u$. For $v \in R$, set $||v||_{u}:=\inf{\{\lambda\in\R\mid \lambda \geq 0 \text{ and } \lambda u \geq |v|\}}$. Then $||v||_{u}$ is a norm on $R$. For each $v\in R$ there is a sequence $\{c_{i}\}_{i\geq 1}\seq R$ of linear combinations of components of $u$ that converges  to $v$ uniformly in the norm $||\cdot||_{u}$.
\end{theorem}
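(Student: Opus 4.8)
The plan is to treat the two assertions separately. That $||\cdot||_{u}$ is a norm is routine apart from definiteness. Homogeneity $||\lambda v||_{u}=|\lambda|\,||v||_{u}$ and the triangle inequality follow at once from the definition and the monotonicity of the lattice operations: if $\lambda u\geq |v|$ and $\mu u\geq|w|$ then $(\lambda+\mu)u\geq|v|+|w|\geq|v+w|$. For definiteness, suppose $||v||_{u}=0$. Then $\tfrac1n u\geq|v|$, i.e.\ $0\leq n|v|\leq u$, for every integer $n\geq 1$; since $R$, being projectable, is Archimedean by Lemma \ref{l:pppimpliesarch}, this forces $|v|=0$, hence $v=0$. (That the defining set is non-empty, so that the infimum exists, is precisely the content of $u$ being a \emph{strong} unit.)

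For the approximation I would first reduce to $v\geq 0$ by writing $v=(v\vee 0)-((-v)\vee 0)$ and approximating each non-negative summand, observing that linear combinations of components of $u$ form a linear subspace of $R$ and are in particular closed under differences. So fix $v\geq 0$ and, using the strong unit, choose $\beta>0$ with $\beta u\geq v$. The heart of the argument is a \emph{spectral family} of components of $u$ manufactured from projectability. For each real $\alpha\geq 0$ the element $(v-\alpha u)\vee 0$ generates a principal band which, by projectability, is a projection band, giving a splitting $R\cong ((v-\alpha u)\vee 0)^{\pperp}\card ((v-\alpha u)\vee 0)^{\perp}$; let $P_{\alpha}$ be the projection onto the first factor and put $e_{\alpha}:=P_{\alpha}u$, a component of $u$. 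Since $P_{\alpha}$ fixes $(v-\alpha u)\vee 0$ and annihilates $(\alpha u-v)\vee 0$, one reads off $P_{\alpha}v\geq\alpha e_{\alpha}$ and $(\mathrm{id}-P_{\alpha})v\leq\alpha(u-e_{\alpha})$, while $\{e_{\alpha}\}_{\alpha\geq 0}$ decreases as $\alpha$ grows because the generating bands shrink.

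Next I would fix a partition $0=\alpha_{0}<\alpha_{1}<\cdots<\alpha_{n}=\beta$ and set $s:=\sum_{i=1}^{n}\alpha_{i-1}(e_{\alpha_{i-1}}-e_{\alpha_{i}})$, a linear combination of components of $u$; each nested difference $e_{\alpha_{i-1}}-e_{\alpha_{i}}$ is itself a component, being the image of $u$ under the projection $Q_{i}:=P_{\alpha_{i-1}}-P_{\alpha_{i}}$ onto the (projection) band $((v-\alpha_{i-1}u)\vee 0)^{\pperp}\cap((v-\alpha_{i}u)\vee 0)^{\perp}$. Restricting the two inequalities above to the range of $Q_{i}$ yields $\alpha_{i-1}Q_{i}u\leq Q_{i}v\leq\alpha_{i}Q_{i}u$, hence $|Q_{i}v-\alpha_{i-1}Q_{i}u|\leq(\alpha_{i}-\alpha_{i-1})Q_{i}u$. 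Since $v\geq 0$ lies in $v^{\pperp}$ and $\sum_{i}Q_{i}=P_{0}-P_{\beta}=P_{0}$ (using $e_{\beta}=0$), we get $v=\sum_{i}Q_{i}v$, so $v-s=\sum_{i}Q_{i}(v-\alpha_{i-1}u)$ is a sum of pairwise orthogonal elements; the identity $|a+b|=|a|+|b|$ for $\orth{a}{b}$ then gives $|v-s|=\sum_{i}|Q_{i}(v-\alpha_{i-1}u)|\leq\bigl(\max_{i}(\alpha_{i}-\alpha_{i-1})\bigr)\sum_{i}Q_{i}u\leq\bigl(\max_{i}(\alpha_{i}-\alpha_{i-1})\bigr)u$, the last step using $\sum_{i}Q_{i}u=P_{0}u=e_{0}\leq u$. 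Thus $||v-s||_{u}\leq\max_{i}(\alpha_{i}-\alpha_{i-1})$, and letting the mesh of the partition tend to $0$ produces the required sequence converging to $v$ uniformly in $||\cdot||_{u}$.

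The main obstacle is the middle step: extracting from mere projectability a well-behaved real-indexed family of components $e_{\alpha}$ of $u$, and verifying that the order inequalities survive restriction to each orthogonal summand $Q_{i}$. Once one has the additivity $|a+b|=|a|+|b|$ on orthogonal elements and the monotonicity of the projections, the telescoping estimate is mechanical; the conceptual content lies entirely in using the principal projection property to produce the spectral components and in checking that nested differences of such components are again components of $u$.
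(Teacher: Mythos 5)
Your proposal is correct, and it is essentially the argument behind the paper's own treatment: the paper does not prove Theorem \ref{t:freudenthal} at all but simply cites \cite[40.2]{LuxZaan71}, and your construction --- the spectral family of components $e_{\alpha}=P_{\alpha}u$ obtained from the principal projection bands generated by $(v-\alpha u)\vee 0$, the Riemann-sum approximants over a partition of $[0,\beta]$, and the telescoping estimate via orthogonal additivity of the absolute value --- is precisely the classical Freudenthal argument given in that reference. The standard facts you invoke without proof (positivity of band projections, and that nestedness of the bands gives $P_{\alpha_{i}}P_{\alpha_{i-1}}=P_{\alpha_{i-1}}P_{\alpha_{i}}=P_{\alpha_{i}}$, so that $Q_{i}$ is again a band projection) are exactly the ones the cited source establishes beforehand, so no genuine gap remains.
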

\begin{proof}See 
 \cite[40.2]{LuxZaan71}.
 \end{proof}

\subsection{Essential extensions and the projectable hull}\label{ss:esspol}
A monomorphism $\iota \colon (G,u)$ $\hookrightarrow$ $(H,v)$  in $\U$ will be referred to as an \emph{extension} (of $G$ by $H$). The extension is \emph{essential} if whenever a $\U$-morphism $f\colon (H,v) \to (A,a)$ is such that the composition $f\circ\iota$ is monic, then $f$ is monic.
Amongst several well-known characterisations of essential extensions we shall use the following.
\begin{lemma}\label{l:ess}Let $\iota \colon (G,u)\hookrightarrow (H,v)$ be a monomorphism in $\U$. The following are equivalent. 
\begin{enumerate}
\item\label{i:ess1}  The extension $\iota$ is essential.
\item\label{i:ess2}  The map $\nu_{H}\colon P \in \Pol{H}\longmapsto P \cap \iota(G) \in \Pol{\iota(G)}$ is an isomorphism from the Boolean algebra of polars of $H$ onto that of $\iota(G)$. The inverse isomorphism is the map $\nu_{H}^{-1}\colon Q \in \Pol{\iota(G)} \longmapsto Q^{\pperp_{H}} \in \Pol{H}$.
\item\label{i:ess3} For each $y \in H$ with $y >0$ there is $x \in G$ with $0 <\iota(x) < ny$ for some integer $n >0$.
\end{enumerate}
\end{lemma}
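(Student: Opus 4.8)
The plan is to establish the cycle $(3)\Rightarrow(1)\Rightarrow(3)$ together with $(3)\Rightarrow(2)\Rightarrow(1)$, using throughout that a $\U$-morphism $f$ is monic exactly when its kernel --- an ideal of $H$ --- is trivial, and that $f\circ\iota$ is monic exactly when $\ker f\cap\iota(G)=\{0\}$. Thus essentiality $(1)$ amounts to the assertion that every ideal $K=\ker f$ arising as a $\U$-kernel (equivalently, every ideal with Archimedean quotient) that meets $\iota(G)$ only in $0$ is itself $\{0\}$.

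First I would prove $(1)\Leftrightarrow(3)$. For $(3)\Rightarrow(1)$: given $f$ with $f\circ\iota$ monic, if $\ker f\neq\{0\}$ pick $0<y\in\ker f$; by $(3)$ there is $x\in G$ with $0<\iota(x)<ny$, and since $\ker f$ is order-convex this forces $\iota(x)\in\ker f\cap\iota(G)=\{0\}$, a contradiction. For the converse $(1)\Rightarrow(3)$ I argue by contraposition. If $(3)$ fails at some $0<y$, the principal ideal $K:=\{h\in H\mid |h|\leq ny\text{ for some }n\geq 1\}$ is nonzero and satisfies $K\cap\iota(G)=\{0\}$. The quotient $H/K$ need not be Archimedean, so $K$ need not be a $\U$-kernel; I therefore pass to $K':=\pi^{-1}(J)$, where $\pi\colon H\to H/K$ is the quotient map and $J$ is the Archimedean radical of $H/K$ (the ideal of elements infinitely small relative to the unit $\pi(v)$). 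Then $H/K'$ is Archimedean, so $\pi'\colon H\to H/K'$ is a $\U$-morphism with $K'\supseteq K\neq\{0\}$. The crucial point is that $K'\cap\iota(G)=\{0\}$: if $0<\iota(x)\in K'$ then $(n\iota(x)-v)^{+}\in K$ for every $n$, and since $v=\iota(u)$ and $\iota(G)$ is a sublattice subgroup, $(n\iota(x)-v)^{+}=\iota((nx-u)^{+})\in K\cap\iota(G)=\{0\}$; hence $n\iota(x)\leq v$ for all $n$, so $\iota(x)=0$ by the Archimedean property of $H$. Thus $\pi'$ witnesses the failure of essentiality.

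Next I would prove $(3)\Rightarrow(2)$. The technical engine is a single orthogonality lemma: \emph{under} $(3)$, \emph{for every polar $P\in\Pol H$ and every $s\in H$, $s\perp(P\cap\iota(G))$ implies $s\perp P$.} Indeed, were $h:=|s|\wedge p>0$ for some $p\in P^{+}$, then $h\in P$ and $(3)$ would supply $0<\iota(x)<nh\leq n|s|$ with $\iota(x)\in P\cap\iota(G)$; then $\iota(x)\perp s$, while $\iota(x)\leq n|s|$ together with $\iota(x)\wedge|s|=0$ forces $\iota(x)\wedge n|s|=0$, i.e. $\iota(x)=0$, a contradiction. From this lemma it follows at once that $\nu_{H}(P)=P\cap\iota(G)=(P^{\perp}\cap\iota(G))^{\perp_{\iota(G)}}$ is a polar of $\iota(G)$ (so $\nu_H$ is well defined), that $(P\cap\iota(G))^{\pperp_{H}}=P$, and that $P^{\perp}\cap\iota(G)=(P\cap\iota(G))^{\perp_{\iota(G)}}$. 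Combined with the purely formal identity $(Q^{\pperp_{H}})\cap\iota(G)=Q$ for $Q\in\Pol{\iota(G)}$ (orthogonality being absolute), these show that $\nu_{H}$ and $Q\mapsto Q^{\pperp_{H}}$ are mutually inverse bijections preserving intersections and complements, hence a Boolean isomorphism; this is exactly $(2)$.

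Finally, to close the loop I would prove $(2)\Rightarrow(1)$. Since $\nu_{H}$ is injective and sends $\{0\}$ to $\{0\}$, every nonzero polar of $H$ has nonzero trace on $\iota(G)$; so for a putative nonzero $\U$-kernel $K$ with $K\cap\iota(G)=\{0\}$ and $0<y\in K$, the principal polar $y^{\pperp_{H}}$ already meets $\iota(G)$, yielding $0<\iota(x)\in y^{\pperp_{H}}$. I expect this last step to be the main obstacle: condition $(2)$ controls \emph{bands}, whereas essentiality concerns arbitrary ideals, and an element of $\iota(G)$ lying in the band $y^{\pperp_{H}}$ need not lie in the ideal $K$ (bands being strictly larger than principal ideals, as already for $\C([0,1])$). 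Bridging this gap --- equivalently, manufacturing from such an $\iota(x)$ a positive element of $\iota(G)$ dominated by a multiple of $y$, and thereby contradicting $K\cap\iota(G)=\{0\}$ --- is where the Archimedean hypothesis and the strong unit $v=\iota(u)$ must again be used, precisely as in the truncation argument of $(1)\Rightarrow(3)$; this is the heart of the lemma.
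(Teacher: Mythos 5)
Your reformulation of essentiality via $\U$-kernels and three of your four implications are sound. $(3)\Rightarrow(1)$ is correct; your $(1)\Rightarrow(3)$ via the principal ideal generated by $y$ and the passage to $K'=\pi^{-1}(J)$ is the right construction and works; and your orthogonality lemma, together with the formal identity $Q^{\pperp_{H}}\cap\iota(G)=Q$, correctly yields $(3)\Rightarrow(2)$. Two facts you use silently do require justification, though both are true: that monomorphisms in $\U$ are exactly the injective morphisms (this needs a test object; it follows, e.g., from Mundici's equivalence, using the unital $\ell$-group corresponding to the free MV-algebra on one generator, which is Archimedean), and that the quotient of a unital $\ell$-group by its ideal of infinitesimals is Archimedean (provable by checking the assertion in totally ordered quotients and using that abelian $\ell$-groups are subdirect products of such, or by identifying $J$ with the intersection of all maximal ideals). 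Note also that the paper itself gives no proof of this lemma --- it cites Conrad --- so a self-contained argument would be a genuine contribution, but only if complete.

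The genuine gap is the one you flag yourself: nothing is ever proved \emph{from} (2). You establish $(1)\Leftrightarrow(3)\Rightarrow(2)$, and your sketch of $(2)\Rightarrow(1)$ stops exactly where the work begins, so the equivalence of the three conditions is not established. The missing idea is to apply (2) not to $y^{\pperp_{H}}$ (from which, as you correctly observe, no domination can be extracted) but to the polar of the truncation $z:=(ny-v)^{+}$. Since $H$ is Archimedean and $v$ is a strong unit, $z>0$ for some $n$; by (2) the nonzero polar $z^{\pperp_{H}}$ has nonzero trace (otherwise $z^{\pperp_{H}}=(z^{\pperp_{H}}\cap\iota(G))^{\pperp_{H}}=\{0\}^{\pperp_{H}}=\{0\}$), so there is $0<g\in z^{\pperp_{H}}\cap\iota(G)$. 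Put $h:=g\wedge\iota(u)\in\iota(G)$; then $0<h\leq v$ and $h\in z^{\pperp_{H}}$. Now set $c:=(h-ny)^{+}$. On one hand $0\leq c\leq h$, so $c\in z^{\pperp_{H}}$ by convexity of polars. On the other hand, in every totally ordered quotient of $H$ either $ny\geq v\geq h$, whence the image of $c$ is $0$, or $(ny-v)^{+}$ has image $0$; hence $c\wedge z=0$ in every such quotient, and therefore in $H$, i.e.\ $c\in z^{\perp_{H}}\cap z^{\pperp_{H}}=\{0\}$. Thus $h\leq ny<(n+1)y$, and $0<h<(n+1)y$ with $h\in\iota(G)$ is exactly (3). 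This closes the cycle via $(2)\Rightarrow(3)$, and it is precisely the point where the Archimedean hypothesis and the strong unit do the work you anticipated; without some such argument the proposal proves strictly less than the lemma asserts.
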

\begin{proof}See  \cite[Prop.\ 3.1 and Thm.\ 3.7]{Conrad71} and \cite[\S 2]{Conrad73}.
 \end{proof}
\begin{definition}\label{d:ph}An essential extension $\epsilon\colon (G,u) \hookrightarrow (K,w)$  in $\U$ is said to be a \emph{projectable hull} if $K$ is projectable, and whenever $\iota\colon (G,u) \hookrightarrow (H,v)$ is another essential extension with $H$ projectable, there exists an injective $\ell$-homomorphism $\varphi\colon (K,w)\rightarrow (H,v)$ in $\U$ that makes the following diagram commute.
\[
 \begin{tikzcd}
(G,u)   \arrow[hookrightarrow]{r}{\epsilon}\arrow[hookrightarrow]{dr}{\iota}&(K,w)\arrow[rightarrow, dashed]{d}{\varphi} \\
&(H,v)
 \end{tikzcd}
\]
\end{definition}
It turns out that the $\ell$-homomorphism $\varphi$ in the preceding definition  is automatically an essential extension. Also note that a projectable hull is unique up to an isomorphism in $\U$. 
\begin{remark}\label{rem:generalisations}Through the general treatment in \cite{Ball82}, hulls related to projectability properties can and have been fruitfully investigated at the level of all  lattice-ordered (not necessarily Abelian) groups, with no assumption on the existence of units. In particular, any lattice-ordered group turns out to have a strongly projectable hull in this generalised sense, \cite[Thm.\ 2.25]{Ball82}, which agrees with the usual one in the representable case.
\end{remark}
\subsection{The Yosida representation: the case of strong units}\label{ss:Yosida}
For $X$ a topological space, recall that a subset  $S\subseteq \C{(X)}$ is said to  \emph{separate  the points of $X$} if for any $x\neq y \in X$ there is $f \in S$ with $f(x)\neq f(y)$. The next result summarises the classical Yosida representation; everything is rooted and essentially proved in \cite{Yosida41}.
\begin{theorem}[The Yosida Representation]\label{t:yosida} Recall that $(G, u)$ is a $\U$-object with maximal spectral space $X_{G}$.
\begin{enumerate}
\item[(a)]For each $\mathfrak{m}\in X_G$, there exists a \emph{unique} monomorphism 
\begin{align*}
\iota_{\mathfrak{m}}\colon (G/\mathfrak{m},u/\mathfrak{m})\hookrightarrow (\R,1)
\end{align*}
in $\U$. Upon setting
\begin{align*}
\widehat{g}(\mathfrak{m}):=\iota_{\mathfrak{m}}(g/\mathfrak{m}) \in \R\,,
\end{align*}
each $g \in G$ induces a  function 
\begin{align*}
\widehat{g}\colon X_G\to \R
\end{align*} 
that  is continuous with respect to the spectral topology on the domain and the Euclidean topology on the co-domain. 
\item[(b)]The map
\begin{align*}
\widehat{\cdot} \,\,\colon (G,u) \longrightarrow (\C{(X_G)},1_{X_G})
\end{align*}
given by \textup{(a)} is a monomorphism in $\U$ whose image $\widehat{G}\subseteq \C{(X_G)}$ separates the points of $X_G$.
\item[(c)] $X_G$ is unique up to a unique homeomorphism with respect to its properties. More explicitly, if $Y$ is any compact Hausdorff space, and $e\colon (G,u)\hookrightarrow (\C{(Y)},1_{Y})$ is any monomorphism in $\U$ whose image $e(G)\subseteq \C{(Y)}$ separates the points of $Y$, then there exists a unique homeomorphism  $f \colon Y \to X_G$ such that $(e(g))(y) = \widehat{g} (f(y))$ for all $g\in G$ and $y \in Y$.
\end{enumerate} 
\end{theorem}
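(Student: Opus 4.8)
The plan is to establish parts (a)--(c) in order, with (b) and (c) resting on the embeddings constructed in (a). For (a), since $\m\in X_G$ is maximal, $G/\m$ is simple and totally ordered, with $u/\m$ a strong unit. I would first note that $G/\m$ is Archimedean: a non-Archimedean totally ordered group carries a proper nontrivial convex subgroup (the elements infinitely small relative to a fixed positive element), which would be a proper nontrivial ideal and contradict simplicity. H\"older's theorem then embeds $G/\m$ as an $\ell$-subgroup of $\R$, and I normalise this embedding so that $u/\m\mapsto 1$. Uniqueness of $\iota_{\m}$ follows because any unital $\ell$-embedding into $(\R,1)$ is determined on each $x$ by the Dedekind cut $\{p/q : p\in\Z,\ q\geq 1,\ p(u/\m)\leq qx\}$, which refers only to the order and the unit. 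Writing $\widehat{g}(\m):=\iota_{\m}(g/\m)$, continuity of $\widehat{g}\colon X_G\to\R$ would follow by expressing, for rational $r=p/q$ with $q\geq 1$, the preimage of $(r,\infty)$ as the support $\Ss_M\big((qg-pu)^+\big)$ and that of $(-\infty,r)$ as $\Ss_M\big((pu-qg)^+\big)$; these are open in the hull-kernel topology, and rational thresholds suffice.

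For (b), that $\widehat{\cdot}$ is a unital $\ell$-homomorphism is immediate, since the operations of $\C(X_G)$ are pointwise and each $\iota_{\m}$ preserves the group and lattice structure, while $\widehat{u}=1_{X_G}$ because $\iota_{\m}(u/\m)=1$. Separation is equally direct: distinct maximal ideals $\m\neq\mathfrak{n}$ are witnessed by some $g\in\m\setminus\mathfrak{n}$, so $\widehat{g}(\m)=0\neq\widehat{g}(\mathfrak{n})$. The one substantive point is injectivity, equivalently the semisimplicity $\bigcap_{\m\in X_G}\m=\{0\}$. Here the hypotheses genuinely enter: for $g>0$ the Archimedean property forbids $g$ from being infinitely small relative to $u$ (otherwise $mg\leq u$ for all $m$ would force $g=0$), and a Zorn's-lemma argument then produces a maximal ideal $\m$ with $g\notin\m$, so that $\widehat{g}\neq 0$.

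For (c), let $e\colon (G,u)\hookrightarrow(\C(Y),1_Y)$ be a separating unital embedding with $Y$ compact Hausdorff. I would define $f\colon Y\to X_G$ by sending $y$ to the kernel of $G\xrightarrow{e}\C(Y)\xrightarrow{\mathrm{ev}_y}\R$. This kernel is a maximal ideal, since the quotient embeds in $\R$ with strong unit the image of $e(u)=1_Y$, hence has no proper nontrivial ideals. Injectivity of $f$ follows from the uniqueness in (a): equal kernels force the two composites to coincide, so $e(g)(y)=e(g)(y')$ for all $g$, and separation by $e(G)$ gives $y=y'$. Continuity is clear because $f^{-1}(\VV_M(g))=\{y:e(g)(y)=0\}$ is a zero set in $Y$, and these sets form a closed base for $X_G$. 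The identity $(e(g))(y)=\widehat{g}(f(y))$ holds by construction, as $\mathrm{ev}_y\circ e$ factors through $G/f(y)$ via exactly the unique map $\iota_{f(y)}$; uniqueness of $f$ then follows since the separating family $\widehat{G}$ distinguishes any two candidate images.

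I expect the main obstacle to be surjectivity of $f$, the step where compactness of $Y$ is indispensable. Fixing $\m\in X_G$, the plan is to show that the closed sets $\{y:e(h)(y)=0\}$, as $h$ ranges over $\m^+$, have the finite intersection property: a finite intersection is the zero set of $e(h_1+\cdots+h_n)$ with each $h_i\in\m^+$, and it cannot be empty, for otherwise $e(h_1+\cdots+h_n)$ would be bounded below by some $c\,1_Y$ on the compact space $Y$, and pulling this inequality back along the order-reflecting embedding $e$ would give $N(h_1+\cdots+h_n)\geq u$ for some integer $N\geq 1$, forcing $u\in\m$ against properness. Compactness then supplies a common zero $y$, at which $\mathrm{ev}_y\circ e$ annihilates $\m$; maximality of $\m$ yields $f(y)=\m$. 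Since a continuous bijection from a compact space onto a Hausdorff space is a homeomorphism, $f$ is the desired homeomorphism, completing (c).
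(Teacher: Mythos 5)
Most of your argument is sound, and I note for the record that the paper itself offers no proof of this theorem (it defers to Yosida's 1941 article), so the only question is whether your argument stands on its own. Part (a) via H\"older plus the convex-subgroup-of-infinitesimals argument is correct, the continuity computation $\widehat{g}^{-1}((p/q,\infty))=\Ss_{M}((qg-pu)^{+})$ is right, and part (c) is complete, including the finite-intersection-property argument where compactness of $Y$ genuinely enters. The genuine gap is at exactly the step you yourself call ``the one substantive point'': injectivity of $\widehat{\cdot}$, i.e.\ the semisimplicity $\bigcap_{\m\in X_G}\m=\{0\}$. You correctly observe that for $g>0$ the Archimedean property yields $m\geq 1$ with $mg\not\leq u$, but then assert that ``a Zorn's-lemma argument then produces a maximal ideal $\m$ with $g\notin\m$.'' As stated this is not a proof, and its natural reading fails: applying Zorn to the family of ideals excluding $g$ produces an ideal maximal with respect to excluding $g$ (a \emph{value} of $g$), which in general is \emph{not} a maximal ideal; and enlarging it to a maximal ideal (possible thanks to the strong unit) may well capture $g$. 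Concretely, in $G=\C{([0,1])}$ with $g(x)=x$, any value of $g$ containing the ideal of functions vanishing on a neighbourhood of $0$ is a non-maximal prime whose unique maximal extension is $\m_{0}=\{f\mid f(0)=0\}$, and $g\in\m_{0}$. So the whole difficulty of the theorem is hidden in the unexplained choice of the poset to which Zorn is applied.

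Here is one correct completion, so you can see what is missing. Given $g>0$, pick $m$ with $mg\not\leq u$, and set $a:=(u-mg)^{+}$, $b:=(mg-u)^{+}$, so that $a\wedge b=0$. The ideal $I$ generated by $a$ is proper: if $u\leq ka$ for some integer $k\geq 1$, then, choosing $n$ with $g\leq nu$ (strong unit), we get $b\leq mg\leq mnu\leq mnka$, whence $b=b\wedge(mnk)a=0$ because disjointness is preserved under integer multiples; but $b=0$ says $mg\leq u$, a contradiction. Now Zorn applies to the proper ideals containing $I$ (unions of chains remain proper because proper means excluding $u$), giving a maximal ideal $\m\supseteq I$; and $g\notin\m$, since otherwise $0\leq u=(u-mg)+mg\leq a+mg\in\m$ would force $u\in\m$ by convexity. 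An argument of this kind (or an explicit appeal to the standard fact that the radical of an Archimedean unital $\ell$-group is trivial --- which is essentially the statement being proved, so quoting it would be circular in a from-scratch proof) is what your part (b) needs; with it, the rest of your proof goes through.
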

\begin{remark}\label{r:W}For the more general Yosida representation in the category of $\ell$-groups equipped with a \emph{weak} unit, see the standard reference  \cite{HagerRobertson77}.
\end{remark}
\begin{remark}Let us explicitly observe that components of the unit $1_{X_{G}}$ in $\C{(X_G)}$ are precisely the characteristic functions $X_{G}\to\R$, i.e.\ the continuous functions with range contained in $\{0,1\}$.
\end{remark}

\section{Representing an $\ell$-group on its minimal spectrum}\label{s:repG}
 Recall that $Z_{G}$ denotes the minimal spectral space of the $\U$-object $(G,u)$. We  show in this section that $G$ may be represented as an $\ell$-subgroup of $\C{(w_{G}{Z_{G}})}$ for the zero-dimensional Wallman compactification $w_{G}Z_{G}$ of $Z_{G}$, which we  describe  below.  
 Before dealing with the general case, let us pause to recall that compactness of $Z_{G}$ is  equivalent to a complementation property of $G$.
\begin{definition}[{\cite{ConradMartinez90bis}}]\label{d:complemented}An $\ell$-group $A$ is \emph{complemented} if for each $x \in A$ there exists $y \in A$ such that $|x|\wedge |y|=0$ and $|x|\vee |y|\neq 0$ is a weak unit of $A$.
\end{definition}
\noindent Throughout we write ${\cdot}{\setminus}{\cdot}$ for set-theoretic difference. We recall $\Ss_{m} = \MinS{A} \setminus \VV_{m}(g)$.
\noindent Conrad and Martinez \cite{ConradMartinez90} proved the equivalence of (i) and (ii) below.
\begin{lemma}\label{l:conmar}For an $\ell$-group $A$, the following are equivalent.
\begin{enumerate}
\item[(i)]\label{i:comp} $A$ is complemented.
\item[(ii)]  $\MinS{A}$ is compact.

\item[(iii)] There exists a weak unit in $A$, the lattice $\pPol{A}$ is bounded, and the inclusion map $\pPol{A}\hookrightarrow \Pol{A}$ is a homomorphism of Boolean algebras.
\item[(iv)] The distributive lattice $(\{\VV_{m}(g)\}_{g\in A},\cap,\cup)$ is a Boolean algebra, i.e.\ for all $g\in A$ there is $f\in A$ such that $\Ss_{m}(g)=\VV_{m}(f)$.
\end{enumerate}
\end{lemma}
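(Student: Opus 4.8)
The plan is to prove the equivalences in Lemma \ref{l:conmar} by establishing a cycle together with a direct link to the Boolean algebra structure, treating (iv) as the concrete combinatorial heart of the statement. I would organise the argument around the correspondence between principal polars and clopen supports in the minimal spectrum. The crucial dictionary is that $\Ss_m(g)$ depends only on the principal polar $g^{\pperp}$, and in fact the assignment $g^{\pperp}\mapsto \Ss_m(g)$ is a lattice isomorphism from $\pPol{A}$ onto the lattice $\{\Ss_m(g)\}_{g\in A}$ of supports; dually, $\VV_m(g)$ corresponds to the polar complement. This follows from \eqref{e:ppolmeet} and \eqref{e:ppoljoin}, and from the fact \cite[2.4.5]{BKW} that for minimal primes $\p$ one has $g\in\p$ if and only if $g^{\perp}\not\subseteq\p$, so that membership in $\VV_m(g)$ and $\Ss_m(g)$ is governed entirely by the orthogonality structure. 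Once this dictionary is in place, each of the four conditions translates into a statement about either the lattice $\pPol{A}$ or its image under this isomorphism.

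First I would prove (iv)$\Leftrightarrow$(iii). The lattice $\{\VV_m(g)\}_{g\in A}$ is always a distributive lattice closed under the operations coming from meets and joins in $\pPol{A}$; it is a Boolean algebra precisely when each element has a complement, i.e.\ when for each $g$ there is $f$ with $\VV_m(g)\cup\VV_m(f)=Z_A$ and $\VV_m(g)\cap\VV_m(f)=\emptyset$. Translating back through the dictionary, $\VV_m(g)\cap\VV_m(f)=\emptyset$ says $g^{\pperp}\vee f^{\pperp}$ is the top of $\Pol{A}$, and $\VV_m(g)\cup\VV_m(f)=Z_A$ says $g^{\pperp}\cap f^{\pperp}=\{0\}$, i.e.\ $f^{\pperp}=g^{\perp}$. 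Hence (iv) asserts exactly that every principal polar has a principal complement in $\Pol{A}$, which is the substance of the statement in (iii) that the inclusion $\pPol{A}\hookrightarrow\Pol{A}$ is a Boolean homomorphism; the boundedness of $\pPol{A}$ together with the existence of a weak unit records that the top $A=0^{\perp}$ is itself principal, namely $A=w^{\pperp}$, as discussed after \eqref{e:ppoljoin}.

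Next I would prove (iii)$\Leftrightarrow$(i). Unwinding Definition \ref{d:complemented}, complementedness says that for each $x$ there is $y$ with $|x|\wedge|y|=0$ and $|x|\vee|y|$ a weak unit. The first condition makes $y^{\pperp}\subseteq x^{\perp}$, and the second, via the weak-unit characterisation $A=(|x|\vee|y|)^{\pperp}=x^{\pperp}\vee y^{\pperp}$, forces $y^{\pperp}=x^{\perp}$; conversely a principal complement $y^{\pperp}=x^{\perp}$ yields such a $y$. So (i) is again precisely the existence of principal complements, matching (iii) and closing this link. Finally, (i)$\Leftrightarrow$(ii) is the Conrad--Martinez result, for which I would cite \cite{ConradMartinez90}, but I would also sketch why it is natural: $Z_A$ is zero-dimensional Hausdorff with the $\VV_m(g)$ forming a clopen base, and a standard compactness argument shows $Z_A$ is compact if and only if this base is closed under complementation, i.e.\ the lattice $\{\VV_m(g)\}$ is Boolean, which is (iv). Thus compactness of $Z_A$ and condition (iv) are two faces of the same fact, and the whole lemma reduces to the single assertion that principal polars are closed under complementation in $\Pol{A}$.

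The main obstacle I anticipate is the (ii)$\Leftrightarrow$(iv) passage, specifically justifying that compactness of $Z_A$ forces the clopen base $\{\VV_m(g)\}$ to be closed under complementation rather than merely generating all clopens. In general a compact zero-dimensional space has a base of clopens that is a Boolean algebra, but one must check that the complement of a given basic clopen $\VV_m(g)$ is \emph{itself basic}, i.e.\ of the form $\VV_m(f)$, and not just a finite union of such. This is where the quasicompactness of $\Ss_m(g)$ (a clopen, hence compact, subset of the compact space) must be combined with the fact that the $\VV_m(f)$ form a base, via a finite-subcover argument, to collapse a finite union $\bigcup_i \VV_m(f_i)=\VV_m(\bigwedge_i f_i)$ back into a single basic set using \eqref{e:ppolmeet}. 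I would handle this carefully, as it is the step where the strong unit plays no role but the Archimedean hypothesis (through Lemma \ref{l:pol=band}, identifying polars with bands) underpins the polar dictionary.
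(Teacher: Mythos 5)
Your proposal is correct and takes essentially the same route as the paper: both defer (i)$\Leftrightarrow$(ii) to Conrad--Martinez, both treat (i)$\Leftrightarrow$(iii) as an elementary application of the polar identities (\ref{e:ppolmeet})--(\ref{e:ppoljoin}), and both prove (iii)$\Leftrightarrow$(iv) by translating the sets $\VV_{m}(g)$, $\Ss_{m}(g)$ into principal polars via the orthogonality characterisation of minimal primes and the fact that $\bigcap_{\p\in\MinS{A}}\p=\{0\}$ --- your ``dictionary'' $g^{\pperp}\mapsto\Ss_{m}(g)$ is exactly the paper's key observation $\VV_{m}(g)=\VV_{m}(g^{\pperp})$, merely packaged as a lattice isomorphism instead of being chased element-by-element. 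Two peripheral remarks of yours are inaccurate, though harmlessly so since nothing in your argument depends on them: the Archimedean property and Lemma \ref{l:pol=band} play no role here (the lemma and its proof are valid for arbitrary $\ell$-groups, and bands never enter), and in your (ii)$\Rightarrow$(iv) sketch the finite-subcover argument should be run on a cover of the compact open set $\VV_{m}(g)$ by basic \emph{open} sets $\Ss_{m}(h_{i})$, giving $\Ss_{m}(g)=\bigcap_{i=1}^{n}\VV_{m}(h_{i})=\VV_{m}\bigl(\bigvee_{i}|h_{i}|\bigr)$ upon taking complements, because the sets $\VV_{m}(f)$ form a \emph{closed} base and cannot be assumed to form an open (neighbourhood) base before (iv) is established.
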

\begin{proof}(i $\Leftrightarrow$ ii) is \cite[2.2]{ConradMartinez90}. 
(i $\Leftrightarrow$ iii) is also stated in passing in \cite{ConradMartinez90}; its proof is an elementary application of  (\ref{e:ppolmeet}--\ref{e:ppoljoin}).\\
To prove (iii $\Leftrightarrow$ iv) we recall that $\bigcap_{\p\in\MinS{G}}\p=\{0\}$ and that a prime $\p\in\Spec{A}$ is minimal if and only if for all $g\in\p$ there is an element $h\not\in\p$ with $h\perp g$ (see \cite[3.4.13]{BKW}). We first show $\VV_{m}(g) = \VV_{m}(g^{\pperp})$ for every $g\in A$. Since $g\in g^{\pperp}$, $\VV_{m}(g^{\pperp})\seq\VV_{m}(g)$. Let $\p\in\VV_{m}(g)$. Since $g\in\p$ and $\p\in\MinS{A}$, there exists $h\not\in\p$ such that $g\perp h$, whence $h\in g^{\perp}$. Let $f\in g^{\pperp}$. Then $f\in\p$, because $|f|\land|h|=0$ and $\p$ is prime. This ensures $\VV_{m}(g^{\pperp})\seq\VV_{m}(g)$.\\
(iv $\Rightarrow$ iii) Let $g\in A$ and $f\in A$ such that $\Ss_{m}(g) = \VV_{m}(f)$. Then each $\p\in\MinS{A}$ is contained either in $\VV_{m}(g)$ or in $\VV_{m}(f)$. We show that $g^{\pperp} = f^{\perp}$.\\
  Let $a\in f^{\perp}$. By primality, $a\in\p$ for each $\p\in\VV_{m}(g)=\Ss_{m}(f)$, and hence $\VV_{m}(g)\seq\VV_{m}(a)$. For each $b\in g^{\perp}$, $b\in\p$ whenever $g\not\in\p$, whence $b\in\p$ for each $\p\in\VV_{m}(f)$. Therefore, $|a|\land |b|\in\p$ for each $\p\in\MinS{A}$, and hence $|a|\land |b|=0$. This ensures $a\in g^{\pperp}$, whence $g^{\pperp}\supseteq f^{\perp}$.\\
  For the other inclusion, let $a\in g^{\pperp}$. By way of contradiction, suppose $|a|\land|f|=c>0$. Then there exists $\p_{c}\in\MinS{A}$ such that $c\not\in\p_{c}$. As a consequence, $a\not\in\p_{c}$, $f\not\in\p_{c}$, and $g\in\p_{c}$. By minimality of $\p$, there exists $b\not\in\p_{c}$ such that $b\perp g$. Since $|a|\land|b|=0\in\p_{c}$, then $a\in\p_{c}$, a contradiction. Hence we proved $g^{\pperp}\seq f^{\perp}$.\\
 As a consequence, $g^{\pperp\perp} = f^{\pperp}$, whence the principal polar $f^{\pperp}$ is the complement of $g^{\pperp}$ in $\pPol{A}$. Moreover, by taking $g=0$, there exists $w\in A$ such that $A=0^{\pperp\perp} = w^{\pperp}$. Then $\pPol{A}$ is bounded and a Boolean algebra, and, as a direct consequence of the definition, $w$ is a weak unit for $A$.\\
 (iii $\Rightarrow$ iv) Let $g\in A$ and $f\in A$ such that $f^{\pperp}$ is the complement of $g^{\pperp}$ in $\pPol{A}$. Then $g^{\pperp\perp} = f^{\pperp}$, and $g^{\pperp} = f^{\perp}$. We show $\VV_{m}(f)=\Ss_{m}(f^{\perp})$, whence $\VV_{m}(f)=\Ss_{m}(g^{\pperp})=\Ss_{m}(g)$. Let $\p\in\VV_{m}(f)$, then $f\in\p$. Since $\p$ is minimal, there is $h\in A$ such that $h\perp f$ and $h\not\in\p$. Hence $h\in f^{\perp}$ and $\p\not\in\VV_{m}(f^{\perp})$. This ensures $\VV_{m}(f)\seq \Ss_{m}(f^{\perp})$. For the other inclusion, let $\p\in \Ss_{m}(f^{\perp})$ and $h\not\in\p$ such that $h\perp f$. Since $|h|\land|f|=0$ the primality of $\p$ ensures $f\in\p$, whence $\p\in\VV_{m}(f)$. Therefore $\Ss_{m}(f^{\perp})\seq\VV_{m}(f)$ and the proof is complete.
\end{proof}
\begin{remark}\label{r:othermincomp}Versions of Lemma \ref{l:conmar} for commutative rings, distributive lattices, and vector lattices were proved in \cite[3.4]{HenJeri65}, \cite[Prop.\ 3.2]{Speed69}, and \cite[37.4]{LuxZaan71}, respectively.
\end{remark}
We now turn to the $w_{G}$-compactification. Recall \cite[4.4(a)]{PorterWoods88} 
that a \emph{Wallman base} of a Hausdorff space $X$ is a base $\mathscr{L}$ of closed sets for $X$ that is stable under finite intersections and unions (and thus contains, in particular, $\varnothing$ and $X$),  is such that if $A\in \mathscr{L}$ and $x\in X\setminus A$ then there is $B\in \mathscr{L}$ with $x\in B$ and $A\cap B=\varnothing$, and is such that for $A,B\in \mathscr{L}$ satisfying  $A\seq X\setminus B$  there exist $C,D\in \mathscr{L}$ with $A\seq X\setminus C\seq D \seq X\setminus B$. Given such a base $\mathscr{L}$, let $w_{\mathscr{L}}X$ denote the collection of inclusion-maximal lattice filters of $\mathscr{L}$. The collection of sets $\{\mathscr{F}\in w_{\mathscr{L}}X\mid A \in \mathscr{F}\}$, as $A$ ranges in $\mathscr{L}$, is a base for the closed sets of a topology on $w_{\mathscr{L}}X$. With this topology, $w_{\mathscr{L}}X$ is compact \cite[4.4(d)]{PorterWoods88}. Given $x \in X$, set $\mathscr{U}_{x}:=\{A\in \mathscr{L}\mid x \in A\}$.  Then $\mathscr{U}_{x}\in w_{\mathscr{L}}X$, and the map
\begin{align}\label{eq:wallman}
X &\longrightarrow w_{\mathscr{L}}X\\
x \in X &\longmapsto \mathscr{U}_{x}\in w_{\mathscr{L}}X\nonumber
\end{align}
is a dense embedding, called the \emph{Wallman compactification} of $X$ induced by $\mathscr{L}$.

For any zero-dimensional Hausdorff space $X$, we note that any collection of clopen sets $\mathscr{B}$ of $X$ which also forms a closed base for the space, is, almost trivially, a Wallman base for the space, and that the generated Wallman compactification $w_{\mathscr{B}}X$ is a zero-dimensional Hausdorff space \cite[4.7(b)]{PorterWoods88}.

Furthermore, we note that in the case where $X$ is zero-dimensional and $\mathscr{B}$ is a Wallman base for $X$ consisting of clopen sets, the (bounded) functions from the uniformly closed subring of $\C{(X)}$ generated by the characteristic functions for the Boolean algebra of clopen sets generated by $\mathscr{B}$ are exactly the functions which extend to $w_{\mathscr{B}}X$ \cite[4I \& 4J]{PorterWoods88}.



We next identify $G$ with its Yosida representation $\widehat{G}\subseteq \C{(X_{G})}$, as given by Theorem \ref{t:yosida}. Recall the map $\lambda\colon Z_{G}\twoheadrightarrow X_{G}$ as in (\ref{eq:lambda}--\ref{eq:action}).  If $\widehat{g}\in \widehat{G}$, the assignment
\begin{align}\label{eq:lambdapb}
\widehat{g}\in \widehat{G}\overset{\mu}{\longmapsto} \widehat{g}\circ\lambda\in \C{(Z_{G})}
\end{align}
yields a unital homomorphism of $\ell$-groups $\mu \colon \widehat{G}\to \C{(Z_{G})}$, and a straightforward  computation confirms that $\mu$ is injective because $\lambda$ is surjective. We therefore obtain a  representation of $G$ as 
\begin{align}\label{eq:mu-rep}
\mu(\widehat{G})\subseteq \C{(Z_{G})}.
\end{align}
\begin{remark}\label{r:zerosets}
We notice that the Yosida representation $\widehat{g}$ of $g$ is such that, for every $\m\in X_{G}$, $\widehat{g}(\m)=0$ if, and only if, $g\in\m$. This ensures the inclusion of zero sets
\begin{equation*}
\VV_{m}(g)\seq\left\{\p\in Z_{G}\mid \mu(\widehat{g})(\p)=\widehat{g}(\lambda(\p))=0\right\}.
\end{equation*}
Hence
\begin{equation*}
\lambda^{-1}(\Ss_{M}{(g)})=\Ss{(\mu(\widehat{g}))}\seq\Ss_{m}{(g)},
\end{equation*}
where $\Ss{(\mu(\widehat{g}))}:=Z_{G}\setminus \mu(\widehat{g})^{-1}(0)$ is the support of $\mu(\widehat{g})$ at $Z_{G}$, and $\lambda^{-1}(\Ss_{M}{(g)})$ is the preimage of $\Ss_{M}{(g)}$ under $\lambda$.
\end{remark}
We consider now the Boolean algebra $\Ba{(Z_{G})}$ of subsets of $Z_{G}$ generated by the collection $\{\VV_m{(g)}\}_{g\in G}$. By \cite[10.2.1]{BKW}, for every $g\in G$, $\VV_{m}{(g)}\subseteq Z_{g}$ is a clopen set, hence all the elements of $\Ba{(Z_{G})}$ are clopens of $Z_{G}$. Moreover, since $\{\VV_m{(g)}\}_{g\in G}\seq\Ba{(Z_{G})}$, they also form a closed base for $Z_{G}$, and hence $\Ba{(Z_{G})}$ is a Wallman base of $Z_{G}$. We denote by
\begin{equation}\label{e:wG}
w_{G}Z_{G}
\end{equation}
the Wallman compactification $w_{\Ba{(Z_{G})}}Z_{G}$ given by $\Ba{(Z_{G})}$. The Boolean algebra of all clopens of $w_{G}Z_{G}$, written $\Cp{(w_{G}Z_{G})}$, can be identified with the elements of $\Ba{(Z_{G})}$:
\begin{equation}\label{e:clopens}
 \Ba{(Z_{G})} \cong \Cp{(w_{G}Z_{G})}.
\end{equation}

Let $\K{(Z_{G})}\seq\C{(Z_{G})}$ be the Boolean algebra of characteristic functions for the elements of $\Ba{(Z_{G})}$. Since each member of $\K{(Z_{G})}$ extends  uniquely to a member  of the Boolean algebra $\K{(w_{G}Z_{G})}$ of all characteristic functions in $\C{(w_{G}Z_{G})}$ by \cite[4G]{PorterWoods88},
we have the isomorphism of Boolean algebras
\begin{equation}\label{e:charfunctions}
 \K{(Z_{G})} \cong \K{(w_{G}Z_{G})}.
\end{equation}
In summary, any two Boolean algebras in (\ref{e:clopens}) and (\ref{e:charfunctions}) are isomorphic, and their dual Stone space is  (\ref{e:wG}).
\begin{remark}\label{r:beta0}It may happen that $w_{G}Z_{G}$ is strictly smaller than the \emph{Banaschewski compactification} of $Z_{G}$, the largest zero-dimensional compactification usually denoted $\beta_{0}Z_{G}$. See Section \ref{s:ex}.
\end{remark}
\begin{lemma}\label{l:freudenthal}With reference to the embedding \textup{(\ref{eq:mu-rep})}, the  uniform completion of the linear subspace of $\C{(Z_{G})}$ generated by $\K{(Z_{G})}$ contains $\mu(\widehat{G})$.
\end{lemma}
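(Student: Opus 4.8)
The plan is to realise $\mu(\widehat{g})$, for an arbitrary $g\in G$, as a uniform limit of explicit Freudenthal spectral sums whose summands are characteristic functions in $\K{(Z_{G})}$. Since $\mu$ and $\widehat{\cdot}$ are $\ell$-homomorphisms and the uniform closure $\overline{L}$ of the linear span $L$ of $\K{(Z_{G})}$ is a linear subspace, it suffices to treat $g\in G^{+}$, recovering a general $g$ from $g=g^{+}-g^{-}$. So fix $g\geq 0$ and, using that $u$ is a strong unit, an integer $m\geq 1$ with $g\leq mu$; then $\mu(\widehat{g})=\widehat{g}\circ\lambda$ is bounded with values in $[0,m]$, and $\mu(\widehat{u})=1_{Z_{G}}=1_{\Ss_{m}(u)}\in\K{(Z_{G})}$, so the construction below stays inside $L$.

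For integers $n\geq 1$ and $0\leq k\leq mn-1$ set $w_{k}:=(ng-ku)^{+}\in G^{+}$. Since $\VV_{m}(w_{k})$ belongs to the generating family of the Boolean algebra $\Ba{(Z_{G})}$, its complement $\Ss_{m}(w_{k})=Z_{G}\setminus\VV_{m}(w_{k})$ lies in $\Ba{(Z_{G})}$ too, so $e_{k}:=1_{\Ss_{m}(w_{k})}\in\K{(Z_{G})}$. I then form $s_{n}:=\tfrac1n\sum_{k=0}^{mn-1}e_{k}\in L$. These $s_{n}$ are exactly the linear combinations of components of the unit furnished by Freudenthal's Theorem \ref{t:freudenthal}, now realised concretely on the minimal spectrum: $\Ss_{m}(w_{k})$ is the clopen support attached to the principal polar $w_{k}^{\pperp}$, i.e.\ the locus where $g$ exceeds the level $k/n$.

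The key step is the uniform bound $\|s_{n}-\mu(\widehat{g})\|_{\infty}\leq 1/n$. Fix $\p\in Z_{G}$, put $\m_{\p}=\lambda(\p)$ and $t:=\mu(\widehat{g})(\p)=\widehat{g}(\m_{\p})\in[0,m]$. Passing to the totally ordered quotient $G/\p$ and writing $\bar{g},\bar{u}$ for the images of $g,u$, order-convexity of $\p$ gives $\bar{w}_{k}=(n\bar{g}-k\bar{u})^{+}$ and hence $w_{k}\notin\p$ if and only if $n\bar{g}-k\bar{u}>0$ in $G/\p$. Composing with the order-preserving quotient $G/\p\twoheadrightarrow G/\m_{\p}\cong\R$, which sends $\bar{g}\mapsto t$ and $\bar{u}\mapsto 1$, I obtain a two-sided pinch: if $k<nt$ then $n\bar{g}-k\bar{u}$ has positive image, hence is positive in $G/\p$, so $w_{k}\notin\p$; conversely $w_{k}\notin\p$ forces $nt-k\geq 0$, i.e.\ $k\leq nt$. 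As $w_{k}$ decreases in $k$, the set $\{k:w_{k}\notin\p\}$ is an initial segment $\{0,\dots,j-1\}$, and the pinch gives $\lceil nt\rceil\leq j\leq\lfloor nt\rfloor+1$, whence $|s_{n}(\p)-t|=|j/n-t|\leq 1/n$. Since $\p$ was arbitrary, $s_{n}\to\mu(\widehat{g})$ uniformly and $\mu(\widehat{g})\in\overline{L}$.

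The step I expect to be the main obstacle — and the reason the minimal spectrum is indispensable — is precisely this pinch. Because $\Ss_{m}(w_{k})$ is computed at minimal primes, Remark \ref{r:zerosets} shows it may strictly contain the genuine level set $\{\mu(\widehat{g})>k/n\}=\lambda^{-1}(\Ss_{M}(w_{k}))$, so a naive level-set estimate would fail; what rescues the argument is that the combinatorial count $j=\#\{k:w_{k}\notin\p\}$ is nonetheless pinned within $1$ of $nt$ by the order-preserving passage to $G/\m_{\p}\cong\R$. In this sense the proof is Freudenthal's Spectral Theorem made explicit: the $e_{k}$ are the components of the unit, $s_{n}$ the uniform spectral approximants, and the content of the lemma is that the minimal spectrum realises each such component as a characteristic function $1_{\Ss_{m}(w_{k})}\in\K{(Z_{G})}$, even though $G$ itself need not be projectable.
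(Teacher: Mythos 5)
Your proof is correct, but it follows a genuinely different route from the paper's. The paper never constructs approximants by hand: it forms the auxiliary Riesz space $V$ generated by $\mu(\widehat{G})\cup\K{(Z_{G})}$ inside $\C{(Z_{G})}$, checks that $1_{Z_{G}}$ is a strong unit for $V$, proves the orthogonality identity $\chi_{g}^{\perp}=\mu(\widehat{g})^{\perp}$ (Claim \ref{c:orthogonal}), deduces that $V$ is projectable (Claim \ref{c:projectable}), and then invokes Freudenthal's Theorem \ref{t:freudenthal} as a black box. You instead re-derive the needed instance of Freudenthal's theorem explicitly: the truncations $w_{k}=(ng-ku)^{+}$, the clopens $\Ss_{m}(w_{k})$, and the composite $G\twoheadrightarrow G/\p\twoheadrightarrow G/\m_{\p}\cong\R$ yield approximants $s_{n}$ with uniform error at most $1/n$. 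Your pinch ($k<nt$ implies $w_{k}\notin\p$ implies $k\leq nt$) is sound --- it is precisely the pointwise counterpart of Claim \ref{c:orthogonal} and Remark \ref{r:zerosets}, and it correctly controls the boundary overcount that would defeat a naive identification of $\Ss_{m}(w_{k})$ with the level set of $\mu(\widehat{g})$. As to what each approach buys: yours is self-contained (no auxiliary space, no projectability verification, no appeal to Theorem \ref{t:freudenthal}), gives an explicit rate, and produces approximants that visibly lie in the linear span of $\K{(Z_{G})}$, whereas the paper's black-box application of Freudenthal a priori yields linear combinations of arbitrary components of the unit of $V$, which must still be recognised as belonging to that span. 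On the other hand, the paper's detour is not wasted work: Claim \ref{c:projectable} is cited verbatim for the projectability of $\PH{(G)}$ in Theorem \ref{t:ph}, and the analogue of Claim \ref{c:orthogonal} (namely $\chi_{g}^{\perp}=\pi(g)^{\perp}$) is invoked there as well, so replacing this lemma's proof by yours would still require establishing both claims elsewhere; moreover, routing the representation through Freudenthal's theorem itself is the narrative point of the paper, which your argument deliberately unwinds.
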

\begin{proof}Let $V$ be the Riesz space generated by $\mu(\widehat{G})\cup \K{(Z_{G})}$ in $\C{(Z_{G})}$.
\begin{claim}
 The function $1_{Z_{G}}$ is a strong unit for $V$.
\end{claim}
\begin{proof}
For each $g\in G$ the function $\widehat{g}$ is bounded in $X_{G}$. Since the image of the function $\widehat{g}\circ\lambda$ coincides with the image of $\widehat{g}$, each element $\mu(\widehat{g})\in\mu(\widehat{G})$ is bounded. The characteristic functions in $\K{(Z_{G})}$ are bounded functions by definition. Therefore each element of $V$ is bounded, because the Riesz space operations preserve the property of being bounded. Hence $1_{Z_{G}}$ is a strong unit for $V$.
\end{proof}
For each $g\in G$ let $\chi_{g}\in\K{(Z_{G})}$ be the characteristic function of $\Ss_{m}(g)$.
\begin{claim}\label{c:orthogonal}$\chi_{g}^\perp=\mu(\widehat{g})^\perp$ in $V$. 
\end{claim}
\begin{proof}[Proof of Claim \ref{c:orthogonal}] 
Let $v\in V$ such that $v\in\chi_{g}^{\perp}$. Hence $v(\p)\land\chi_{g}(\p)=0$ for every $\p\in Z_{G}$. As a consequence, if $\chi_{g}(\p)\neq0$, then $v(\p)=0$. Therefore, whenever $\mu(\widehat{g})(\p)\neq0$, we have $v(\p)=0$ by Remark \ref{r:zerosets}. Then $v(\p)\land\mu(\widehat{g})(\p)=0$ for all $\p\in Z_{G}$, whence $v\in\mu(\widehat{g})^{\perp}$. This proves the inclusion $\chi_{g}^\perp\seq\mu(\widehat{g})^\perp$.\\
For the other inclusion, suppose there exists $v\in V$ such that $v\in\mu(\widehat{g})^\perp\setminus\chi_{g}^\perp$. Then the supports $\Ss{(v)}$ and $\Ss{(\mu(\widehat{g}))}$ are disjoint, and there is $\p\in Z_{G}$ such that $|v(\p)|\land|\chi_{g}(\p)|\neq0$. By definition of $\chi_{g}$, $\chi_{g}^{-1}(1)=\Ss{(\chi_{g})} = \Ss_{m}{(g)}$ and therefore $\p$ is in $\Ss{(v)}\cap\Ss_{m}{(g)}$, which is open since $v$ is a continuous function on $Z_{G}$ and $\Ss_{m}{(g)}$ is a basic open subset of $Z_{G}$. Hence there exists a (nonempty) basic open set $\Ss_{m}{(h)}$ (with $0\neq h\in G$) such that $\p\in\Ss_{m}{(h)}\seq\Ss{(v)}\cap\Ss_{m}{(g)}$. Let $\m\in\Ss_{M}{(h)}$. Then $h\not\in\m$, whence $h\not\in\q$ for every $\q\in\lambda^{-1}(\m)$ and $\lambda^{-1}(\m)\seq\Ss_{m}{(h)}$. Since $\Ss_{m}{(h)}$ is disjoint from $\Ss{(\mu(\widehat{g}))}$, and $\Ss{(\mu(\widehat{g}))}=\lambda^{-1}(\Ss_{M}{(g)})$ by Remark \ref{r:zerosets}, $\q\not\in\lambda^{-1}(\Ss_{M}{(g)})$ for every $\q\in\lambda^{-1}(\m)$. Then $\m\in\VV_{M}{(g)}$ and $g\in\m$. This ensures $\Ss_{M}{(h)}\seq\VV_{M}{(g)}$, and hence $|h|\land|g|=0$. By primality of $\p$, either $h\in\p$ or $g\in\p$, in contradiction with $\p\in\Ss_{m}{(h)}\seq\Ss_{m}{(g)}$. This ensures $v\in\chi_{g}^\perp$, whence $\mu(\widehat{g})^\perp\seq\chi_{g}^\perp$.
\end{proof}
\begin{claim}\label{c:projectable}$V$ is projectable. 
\end{claim}
\begin{proof}[Proof of Claim \ref{c:projectable}] We need to show that each $v \in V$ induces a product splitting  $V\cong v^{\pperp}\times v^{\perp}$. 
By Claim \ref{c:orthogonal}, $\chi_{g}^\pperp=\mu(\widehat{g})^\pperp$ for all $g\in G$. But since $\chi_{g}$ is a component of $1_{Z_{G}}$ (because $\chi_{g}\vee (1_{Z_{G}}-\chi_{g})=1_{Z_{G}}$ and $\chi_{g}\wedge(1_{Z_{G}}-\chi_{g})=0$), there is an induced  splitting $V\cong \chi_{g}^\pperp \times \chi_{g}^{\perp}$. This shows that each element of $V$ that lies in the generating set  $\mu(\widehat{G})\cup \K{(Z_{G})}$  induces the splitting above. A routine induction argument now shows that the splitting property is preserved by sums, meets and joins, and products by real scalars, thus settling the claim.
\end{proof}
By the preceding claim, we may apply  Freudenthal's Spectral Theorem \ref{t:freudenthal} to $(V,1_{Z_{G}})$, and infer that each element of   $\mu(\widehat{G})$ is a $1_{Z_{G}}$-uniform limit of a sequence of elements in the linear subspace of $\C{(Z_{G})}$ generated by $\K{(Z_{G})}$. Since the norm induced by $1_{Z_{G}}$ on $\C{(Z_{G})}$ coincides with the supremum norm, this completes the proof.
\end{proof}
\begin{lemma}\label{l:ext}For each $g\in G$,  there exists a unique continuous extension of $\mu(\widehat{g}) \in \C{(Z_{G})}$ to an element $g^{\sharp}\in\C{(w_{G}Z_{G})}$. That is, $g^{\sharp}$ is the unique such element whose restriction to $Z_{G}$ is $\mu(\widehat{g})$. In symbols,
\begin{align}\label{eq:ext}
g^{\sharp}{\upharpoonright_{Z_{G}}}=\mu(\widehat{g}).
\end{align}
\end{lemma}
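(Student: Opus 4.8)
The plan is to derive uniqueness from general topology and existence from Lemma~\ref{l:freudenthal} together with the extension theory of the Wallman base $\Ba{(Z_{G})}$.

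Uniqueness is immediate and can be dispatched first. Since the Wallman embedding (\ref{eq:wallman}) is dense and $w_{G}Z_{G}$ is Hausdorff, any two continuous maps $w_{G}Z_{G}\to\R$ that agree on the dense subset $Z_{G}$ must coincide; hence there is at most one continuous extension $g^{\sharp}$ of $\mu(\widehat{g})$, and the whole content of the lemma is existence.

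For existence, I would first observe that the linear span $L$ of $\K{(Z_{G})}$ inside $\C{(Z_{G})}$ consists exactly of the step functions that are constant on each block of some finite partition of $Z_{G}$ by members of $\Ba{(Z_{G})}$; such a rewriting of a finite combination $\sum_i a_i\chi_{B_i}$ is possible because the finitely many $B_i$ generate a finite Boolean subalgebra of $\Ba{(Z_{G})}$ whose atoms partition $Z_{G}$. Each such $s=\sum_{j}c_{j}\chi_{C_{j}}$ extends canonically: under the isomorphism (\ref{e:clopens}) each block $C_{j}$ corresponds to a clopen $\widetilde{C_{j}}$ of $w_{G}Z_{G}$, these $\widetilde{C_{j}}$ partition $w_{G}Z_{G}$, and $s^{\sharp}:=\sum_{j}c_{j}\chi_{\widetilde{C_{j}}}$ is a continuous function on $w_{G}Z_{G}$ whose restriction to $Z_{G}$ is $s$, the latter because restriction implements the isomorphism (\ref{e:charfunctions}). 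Equivalently, one may invoke \cite[4I \& 4J]{PorterWoods88} directly, by which the uniformly closed subring of $\C{(Z_{G})}$ generated by $\K{(Z_{G})}$ is precisely the algebra of functions extending over $w_{G}Z_{G}$; I would likely present the self-contained partition argument since it makes the bookkeeping with (\ref{e:clopens})--(\ref{e:charfunctions}) explicit.

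The final, and only slightly delicate, step is to pass to the uniform limit. By Lemma~\ref{l:freudenthal} I can choose a sequence $(s_{n})$ in $L$ with $s_{n}\to\mu(\widehat{g})$ uniformly on $Z_{G}$. The crux is that the restriction map $\C{(w_{G}Z_{G})}\to\C{(Z_{G})}$ is an isometry for the supremum norm: because $Z_{G}$ is dense in the compact Hausdorff space $w_{G}Z_{G}$, one has $\sup_{Z_{G}}|f|=\sup_{w_{G}Z_{G}}|f|$ for every $f\in\C{(w_{G}Z_{G})}$. Consequently $(s_{n}^{\sharp})$ is uniformly Cauchy on $w_{G}Z_{G}$, hence converges uniformly to some $g^{\sharp}\in\C{(w_{G}Z_{G})}$, and restricting to $Z_{G}$ yields $g^{\sharp}{\upharpoonright_{Z_{G}}}=\lim_{n}s_{n}=\mu(\widehat{g})$, as required. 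I expect this isometry-via-density observation, which is what allows the extensions of the approximants to assemble into a single continuous extension on the compactification, to be the only point needing care; everything else is routine bookkeeping with the Boolean-algebra isomorphisms (\ref{e:clopens})--(\ref{e:charfunctions}) and the already-established approximation of Lemma~\ref{l:freudenthal}.
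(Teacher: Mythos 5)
Your proposal is correct and follows essentially the same route as the paper's own proof: uniqueness via density of $Z_{G}$ in the Hausdorff space $w_{G}Z_{G}$, and existence by taking the Freudenthal approximants from Lemma~\ref{l:freudenthal}, extending them over the compactification through the correspondence $\K{(Z_{G})}\cong\K{(w_{G}Z_{G})}$, and passing to the uniform limit. The only difference is one of exposition: you make explicit the sup-norm isometry of the restriction map (which the paper leaves as ``elementary to check'' when transferring the Cauchy property), which is a welcome clarification but not a different argument.
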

\begin{proof}Indeed, by Lemma \ref{l:freudenthal} there is a sequence $\{c_{i}\}_{i\geq 1}$ of linear combinations of elements of $\K{(Z_{G})}$ that converges uniformly to $\mu(\widehat{g})$.  Each member of $\K{(Z_{G})}$ extends  uniquely to a member  of $\K{(w_{G}Z_{G})}$ \cite[4G]{PorterWoods88}, and therefore each $c_{i}$ extends to a linear combination $k_{i}$ of  elements of  $\K{(w_{G}Z_{G})}$. It is now elementary to check that $\{k_{i}\}_{i\geq 1}$ is a Cauchy sequence in  $\C{(w_{G}Z_{G})}$ because $\{c_{i}\}_{i\geq 1}$ is one in $\C{(Z_{G})}$. Take $g^{\sharp}$ to be the limit of $\{k_{i}\}_{i\geq 1}$, which is of course a continuous function by the Uniform Limit Theorem. Finally, note that  $g^\sharp$ has property (\ref{eq:ext}) by construction, and is the unique  member of $\C{(w_{G}Z_{G})}$ with this property  because $Z_{G}$ is dense in its $w_{G}$-compactification, and the codomain of the functions --- namely, $\R$ --- is Hausdorff. 
\end{proof}
In light of Lemma \ref{l:ext}, the function
\begin{align}\label{eq:sharp}
\cdot^{\sharp}\colon G \hookrightarrow \C{(w_{G}Z_{G})}
\end{align}
that acts by $g\mapsto g^{\sharp}$ is injective. It is elementary that this embedding preserves the lattice and group structure of $G$, and is also unit-preserving. We have therefore proved:
\begin{theorem}\label{t:min-rep}Each $\U$-object $(G,u)$ has a representation into $\C{(w_{G}Z_{G})}$ as in \textup{(\ref{eq:sharp})}.\qed
\end{theorem}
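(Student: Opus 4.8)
The plan is to read the statement straight off the construction that precedes it, since the hard analytic work has already been discharged. Lemma~\ref{l:ext} supplies, for each $g\in G$, a \emph{unique} function $g^{\sharp}\in\C{(w_{G}Z_{G})}$ extending $\mu(\widehat{g})$, so the assignment $g\mapsto g^{\sharp}$ of~(\ref{eq:sharp}) is well-defined; it then only remains to check that $\cdot^{\sharp}$ is an injective, unital $\ell$-homomorphism. The one tool I would invoke repeatedly is that $Z_{G}$ is dense in its Wallman compactification $w_{G}Z_{G}$ and that $\R$ is Hausdorff, so that any two continuous functions $w_{G}Z_{G}\to\R$ which agree on $Z_{G}$ must coincide. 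This is exactly the uniqueness principle already used inside Lemma~\ref{l:ext}.

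For injectivity, I would suppose $g^{\sharp}=h^{\sharp}$, restrict to $Z_{G}$, and use~(\ref{eq:ext}) to get $\mu(\widehat{g})=\mu(\widehat{h})$. Since $\mu$ is injective (noted after~(\ref{eq:lambdapb}), because $\lambda$ is surjective) and the Yosida map $\widehat{\cdot}$ is a monomorphism by Theorem~\ref{t:yosida}(b), this forces $g=h$.

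For the $\ell$-homomorphism property, I would fix $g,h\in G$ together with one of the operations $\ast\in\{+,\vee,\wedge\}$ and compare the two continuous functions $(g\ast h)^{\sharp}$ and $g^{\sharp}\ast h^{\sharp}$ on $w_{G}Z_{G}$ (pointwise operations preserve continuity). By~(\ref{eq:ext}) their restrictions to the dense subset $Z_{G}$ are $\mu(\widehat{g\ast h})$ and $\mu(\widehat{g})\ast\mu(\widehat{h})$, which agree because $\mu\circ\widehat{\cdot}$ is a composite of $\ell$-homomorphisms (Theorem~\ref{t:yosida}(b) and the paragraph around~(\ref{eq:lambdapb})). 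The density principle then promotes this to equality on all of $w_{G}Z_{G}$. For unit-preservation I would note $\widehat{u}=1_{X_{G}}$, so $\mu(\widehat{u})=\widehat{u}\circ\lambda=1_{Z_{G}}$, whose unique continuous extension is the constant function $1_{w_{G}Z_{G}}$; hence $u^{\sharp}=1_{w_{G}Z_{G}}$.

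I do not expect a genuine obstacle here: the substance of the representation was already supplied by Freudenthal's Spectral Theorem in Lemma~\ref{l:freudenthal} and by the uniform-limit extension of Lemma~\ref{l:ext}. The only point demanding any care is the systematic transfer of identities verified on $Z_{G}$ up to the compactification, and this is precisely the density-plus-Hausdorff argument already established as the uniqueness clause of Lemma~\ref{l:ext}.
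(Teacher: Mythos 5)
Your proposal is correct and follows essentially the same route as the paper: the paper's proof consists precisely of invoking Lemma~\ref{l:ext} for injectivity and declaring the preservation of the lattice, group, and unit structure ``elementary,'' and your density-plus-Hausdorff argument (continuous functions on $w_{G}Z_{G}$ agreeing on the dense subset $Z_{G}$ must coincide) is exactly the standard way to discharge those elementary checks. Nothing is missing; you have simply written out what the paper leaves implicit.
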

\begin{definition}\label{d:PH}We  write $\PH{(G)}$ for the $\ell$-subgroup of $\C{(w_{G}Z_{G})}$ generated by 
 \begin{align}\label{eq:generators}
 G^{\sharp}\cup\K{(w_{G}Z_{G})}.
 \end{align} 
 We further write  
\begin{align}\label{eq:ph}
\pi \colon G \hookrightarrow \PH{(G)}
\end{align}
for the $\U$-monomorphism of $G$ into $\PH{(G)}$ obtained by restricting the codomain of (\ref{eq:sharp}) to $\PH{(G)}$.
\end{definition}
\section{Characterisation of the elements of $\PH{(G)}$}\label{s:el}
In this section we characterise the functions in $\C{(w_{G}Z_{G})}$ that lie in $\PH{(G)}$.
We begin by preparing two  lemmas.
\begin{lemma}\label{l:separation}There is a  homeomorphism $\MaxS{\PH{(G)}}\cong w_{G}Z_{G}$.
\end{lemma}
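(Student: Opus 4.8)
The plan is to apply the uniqueness clause of the Yosida Representation Theorem \ref{t:yosida}(c) to the $\U$-object $\PH{(G)}$ together with its tautological embedding into $\C{(w_{G}Z_{G})}$. First I would verify that $\PH{(G)}$ is genuinely a $\U$-object and that the inclusion $\PH{(G)}\hookrightarrow\C{(w_{G}Z_{G})}$ is a morphism in $\U$. By Definition \ref{d:PH}, $\PH{(G)}$ is the $\ell$-subgroup of $\C{(w_{G}Z_{G})}$ generated by $G^{\sharp}\cup\K{(w_{G}Z_{G})}$; since $\cdot^{\sharp}$ is unit-preserving we have $1_{w_{G}Z_{G}}=u^{\sharp}\in\PH{(G)}$, and every generator is a bounded function, so (arguing exactly as in the Claim inside the proof of Lemma \ref{l:freudenthal}) every element of $\PH{(G)}$ is bounded and $1_{w_{G}Z_{G}}$ is a strong unit. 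As a sub-$\ell$-group of the Archimedean $\C{(w_{G}Z_{G})}$, $\PH{(G)}$ is itself Archimedean, so it is a $\U$-object and the inclusion is a unital $\ell$-embedding.

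The crux is to show that this image separates the points of $w_{G}Z_{G}$, and this is immediate from the presence of all characteristic functions. Indeed, $w_{G}Z_{G}$ is a zero-dimensional compact Hausdorff space, being the Wallman compactification of $Z_{G}$ along the base $\Ba{(Z_{G})}$ of clopen sets. Hence, given distinct points $p\neq q$, there is a clopen $U\seq w_{G}Z_{G}$ with $p\in U$ and $q\notin U$; its characteristic function $\chi_{U}$ lies in $\K{(w_{G}Z_{G})}\seq\PH{(G)}$ and satisfies $\chi_{U}(p)=1\neq 0=\chi_{U}(q)$. Therefore $\PH{(G)}$ separates the points of $w_{G}Z_{G}$.

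Finally, since $w_{G}Z_{G}$ is compact Hausdorff, Theorem \ref{t:yosida}(c), applied with the $\U$-object $\PH{(G)}$ in place of $G$, the space $Y=w_{G}Z_{G}$, and $e$ the inclusion $\PH{(G)}\hookrightarrow\C{(w_{G}Z_{G})}$, furnishes a unique homeomorphism $w_{G}Z_{G}\xrightarrow{\ \cong\ }\MaxS{\PH{(G)}}$. This is precisely the asserted homeomorphism.

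I do not anticipate a genuine obstacle: the content of the lemma is entirely subsumed by Yosida uniqueness once the two routine verifications above are recorded. The only points requiring minimal care are that one invokes the \emph{uniqueness} part (c), rather than building $\MaxS{\PH{(G)}}$ by hand, and that the separation of points is supplied by the adjoined characteristic functions, not by $G^{\sharp}$ alone; indeed $G^{\sharp}$ by itself need not separate the points of $w_{G}Z_{G}$ unless $X_{G}$ and $w_{G}Z_{G}$ happen to be homeomorphic, as the Yosida theory already makes clear.
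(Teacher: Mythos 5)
Your proof is correct and follows essentially the same route as the paper: the characteristic functions $\K{(w_{G}Z_{G})}\seq\PH{(G)}$ separate points because $w_{G}Z_{G}$ is zero-dimensional, and then the uniqueness clause of Yosida's Theorem \ref{t:yosida}(c) yields the homeomorphism. The paper states this in one line, leaving implicit the routine verifications (that $\PH{(G)}$ is a $\U$-object with strong unit $1_{w_{G}Z_{G}}$) which you rightly spell out.
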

\begin{proof}Indeed, the characteristic functions $\K{(w_{G}Z_{G})}\seq \PH{(G)}$ separate the points of $w_{G}Z_{G}$, because the latter is zero-dimensional; now apply Yosida's Theorem \ref{t:yosida}. 
\end{proof}
\begin{remark}\label{r:known}The next lemma is a consequence of more general results, cf.\ \cite[Proposition 2.4]{HagerKimberMcGovern03}. We provide a proof here, for the reader's convenience.
\end{remark}
\begin{lemma}\label{l:products} Let $g\in G$, and let $\chi\in G$ be a component of the unit $u$. Let us identify $G$ with  its Yosida representation $\widehat{G}\seq\C{(X_{G})}$. The pointwise product $g\chi$ defined by $(g\chi)(x)=g(x)\chi(x)$ for each $x \in X_{G}$ is a continuous function, and hence an element of $\C{(X_{G})}$. Then $g\chi\in \widehat{G}$.
\end{lemma}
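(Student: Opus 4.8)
The plan is to exhibit a concrete element of $G$ whose Yosida representation equals the pointwise product $\widehat{g}\,\widehat{\chi}$. Since $\chi$ is a component of the unit $u$, it induces the product splitting $G\cong\chi^{\perp}\card\chi^{\pperp}$, so I would first decompose $g=g_{1}+g_{2}$ with $g_{1}\in\chi^{\perp}$ and $g_{2}\in\chi^{\pperp}$, and claim that $g_{2}$ is the sought element, i.e.\ that $\widehat{g_{2}}=\widehat{g}\,\widehat{\chi}$. Recalling (the remark following Theorem \ref{t:yosida}) that components of $1_{X_{G}}$ in $\C{(X_{G})}$ are exactly the characteristic functions, $\widehat{\chi}$ is the characteristic function of the clopen set $U:=\widehat{\chi}^{-1}(1)\seq X_{G}$; so the goal reduces to checking that $\widehat{g_{2}}$ agrees with $\widehat{g}$ on $U$ and vanishes off $U$.

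The two structural facts I would record at the outset are that, in the decomposition $u=\chi+(u-\chi)$ of the unit compatible with the splitting, the element $\chi$ is a strong unit of $\chi^{\pperp}$ and $u-\chi$ is a strong unit of $\chi^{\perp}$. Next, since $\chi\wedge(u-\chi)=0$ and each quotient $G/\m$ is totally ordered, for every $\m\in X_{G}$ exactly one of $\chi\in\m$ and $u-\chi\in\m$ holds (not both, for otherwise $u=\chi+(u-\chi)\in\m$ would contradict properness of $\m$); and these two alternatives are precisely $\widehat{\chi}(\m)=0$ (that is, $\m\notin U$) and $\widehat{\chi}(\m)=1$ (that is, $\m\in U$).

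The heart of the argument is then a pointwise case analysis over $\m\in X_{G}$. When $\chi\in\m$, order-convexity of $\m$ together with $|g_{2}|\leq n\chi$ for some $n\geq 1$ (available because $\chi$ is a strong unit of the factor $\chi^{\pperp}$ containing $g_{2}$) forces $g_{2}\in\m$, so $\widehat{g_{2}}(\m)=0=\widehat{g}(\m)\,\widehat{\chi}(\m)$. When $u-\chi\in\m$, the symmetric argument applied to $\chi^{\perp}\ni g_{1}$, using $|g_{1}|\leq n(u-\chi)\in\m$, gives $g_{1}\in\m$; since $\widehat{\cdot}$ is additive this yields $\widehat{g}(\m)=\widehat{g_{1}}(\m)+\widehat{g_{2}}(\m)=\widehat{g_{2}}(\m)=\widehat{g}(\m)\,\widehat{\chi}(\m)$. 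This establishes $\widehat{g_{2}}=\widehat{g}\,\widehat{\chi}$ on all of $X_{G}$, whence $g\chi=\widehat{g_{2}}\in\widehat{G}$; the continuity of $g\chi$ is immediate, being a pointwise product of continuous real-valued functions.

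I expect the only real point requiring care to be the clean justification that $\chi$ and $u-\chi$ serve as strong units of the respective polar factors $\chi^{\pperp}$ and $\chi^{\perp}$: this is exactly what converts membership in a factor into an order bound $|g_{2}|\leq n\chi$ (resp.\ $|g_{1}|\leq n(u-\chi)$) that the order-convexity of $\m$ can exploit. Everything else is bookkeeping with the decomposition $g=g_{1}+g_{2}$ and with the fact that a prime quotient is totally ordered.
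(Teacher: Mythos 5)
Your proof is correct, and it pivots on the same key step as the paper's: the product splitting $\widehat{G}=\chi^{\perp}\card\chi^{\pperp}$ induced by the component $\chi$, the decomposition $g=g_{1}+g_{2}$ with $g_{1}\in\chi^{\perp}$, $g_{2}\in\chi^{\pperp}$, and the identification of $g\chi$ with the summand $g_{2}$. Where the two arguments genuinely differ is in how this identification is verified. The paper argues topologically: it splits $X_{G}=A\sqcup B$ along $A:=\chi^{-1}(0)$, $B:=\chi^{-1}(1)$, asserts as ``clear'' the containments $\chi^{\perp}\seq\C{(A)}$ and $\chi^{\pperp}\seq\C{(B)}$ (i.e.\ that members of $\chi^{\perp}$ vanish on $B$ and members of $\chi^{\pperp}$ vanish on $A$), and then reads off $g\chi=g_{2}\chi=g_{2}$ pointwise. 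You instead prove $\widehat{g_{2}}=\widehat{g}\,\widehat{\chi}$ ideal-theoretically, one maximal ideal at a time: you note that $\chi$ and $u-\chi$ are the components of $u$ under the splitting and hence strong units of $\chi^{\pperp}$ and $\chi^{\perp}$ respectively, that primality forces exactly one of $\chi\in\m$, $u-\chi\in\m$, and that in either case the bound $|g_{2}|\leq n\chi$ (resp.\ $|g_{1}|\leq n(u-\chi)$), via order-convexity, puts the appropriate summand into $\m$. This buys you self-containedness: your strong-unit argument is in effect a proof of the containments the paper labels ``clearly'' --- the first is immediate pointwise, but the second, $\chi^{\pperp}\seq\C{(B)}$, does require a small argument (either your bound $|g_{2}|\leq n\chi$, or the observation that every element of $\chi^{\pperp}$ is orthogonal to $u-\chi\in\chi^{\perp}$, which forces vanishing on $A$). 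The cost is a somewhat longer case analysis where the paper's topological picture makes the conclusion visible at a glance.
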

\begin{proof}(Skipping all trivialities, in this proof we identify isomorphism with equality  without further warning.) Since $\chi$ is a component of $u$ we have a product splitting $\widehat{G}=\chi^{\perp}\times\chi^{\perp\perp}$ ($\perp$ computed in $\widehat{G}$), and a corresponding disjoint union decomposition $X_{G}=A\sqcup B$, $A:=\chi^{-1}(0)$, $B:=\chi^{-1}(1)$, $A$ and $B$ disjoint clopens in $X_{G}$. Then, clearly, $\C{(X_{G})}=\C{(A)}\times\C{(B)}$, $\chi^{\perp}\seq \C{(A)}$, and  $\chi^{\perp\perp}\seq \C{(B)}$.  Now since $g\in \chi^{\perp}\times\chi^{\perp\perp}$, $g$ may be uniquely expressed as a sum $g_{1}+g_{2}$, $g_{i} \in \widehat{G}$, $g_{1}\in \chi^{\perp}$, $g_{2} \in \chi^{\perp\perp}$. Then $g$ and $g_{2}$ agree over $B$, so that $g\chi=g_{2}\chi=g_{2}\in \widehat{G}$, and the lemma is proved.
\end{proof}
\begin{remark}
Let $0\leq g\in G$, and let $\chi\in G$ be a component of the unit
$u$. Identifying $G$ with its Yosida representation
$\widehat{G}\seq\C{(X_{G})}$, we notice that the function $g$ is
bounded on the support of the characteristic function
$\chi$. Therefore, there exists a (unique minimal) integer $n\geq0$
such that $g\leq n\chi$ holds on the support of $\chi$, and hence
$g\chi=g\wedge n\chi$ holds in $G$. This yields  an explicit
representation of the product $g\chi$ discussed in Lemma
\ref{l:products}, using only the operations of $G$. Any element $g\in
G$, indeed, can be written as the difference $g^{+}-g^{-}$ between its
\emph{positive part} $g^{+}:=g\vee 0$ and its \emph{negative part}
$g^{-}:=(-g)\vee 0$, with $0\leq g^{+},g^{-}\in G$. As a consequence,
there exist two (unique minimal) integers $n_{+},n_{-}\geq0$ such that
$g\chi = (g^{+}\wedge n_{+}\chi) - (g^{-}\wedge n_{-}\chi)$.
 
In the following, we use the product $g\chi$ for brevity, but each
such occurrence may be replaced by the equivalent expression
$(g^{+}\wedge n_{+}\chi) - (g^{-}\wedge n_{-}\chi)$.
\end{remark}
By a \emph{partition of unity} in a $\U$-object $(G,u)$ we mean in this paper a finite family of  non-zero elements $P:=\{\chi_{i}\}_{i=1}^{l}$ of $G$ such that $\sum_{i=1}^{l}\chi_{i}=u$, and $\chi_{i}\wedge\chi_{j}=0$ whenever $i\neq j$. It is elementary that each $\chi_{i}$ is a component of $u$. It follows that, in the Yosida representation $\widehat{G}$ of $G$, each $\widehat{\chi}_{i}$ is a characteristic function.

We can now prove:
\begin{theorem}\label{t:decomposition}For each $e\in \C{(w_{G}Z_{G})}$, the following are equivalent.
\begin{enumerate}
\item\label{i:decomp1} $e\in \PH{(G)}$.
\item\label{i:decomp2} There exists a partition of unity $\chi_{1},\ldots,\chi_{l}$ in $\PH{(G)}$  --- equivalently, in $\C{(w_{G}Z_{G})}$ ---  along with elements $a_{1},\ldots,a_{l}\in G$, such that 
\begin{align}\label{eq:decomp}
e=\sum_{i=1}^{l}a_{i}^{\sharp}\chi_{i},
\end{align}
where $\sharp$ is the embedding \textup{(\ref{eq:sharp})}, and $a_{i}^{\sharp}\chi_{i}$ denotes the pointwise product of $a_{i}^{\sharp}$ and $\chi_{i}$ in $\C{(w_{G}Z_{G})}$.
\end{enumerate}
\end{theorem}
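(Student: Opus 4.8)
The plan is to establish the two implications separately, using Lemma~\ref{l:products} for the backward direction and a common-refinement argument for the forward one. For $(\ref{i:decomp2})\Rightarrow(\ref{i:decomp1})$, I would first invoke Lemma~\ref{l:separation} to regard $\PH{(G)}$ as a $\U$-object whose Yosida representation lives on $\MaxS{\PH{(G)}}\cong w_{G}Z_{G}$. Under this identification each $\chi_{i}$ is exactly a component of the unit $u^{\sharp}=1_{w_{G}Z_{G}}$ of $\PH{(G)}$, while each $a_{i}^{\sharp}$ lies in $\PH{(G)}$ by Definition~\ref{d:PH}. Applying Lemma~\ref{l:products} --- not to $G$, but to the $\U$-object $\PH{(G)}$ --- then shows that each pointwise product $a_{i}^{\sharp}\chi_{i}$ is again an element of $\PH{(G)}$. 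As $\PH{(G)}$ is closed under finite sums, $\sum_{i=1}^{l}a_{i}^{\sharp}\chi_{i}\in\PH{(G)}$, giving $(\ref{i:decomp1})$.

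For $(\ref{i:decomp1})\Rightarrow(\ref{i:decomp2})$, let $D\seq\C{(w_{G}Z_{G})}$ be the set of all functions admitting a decomposition of the form $(\ref{eq:decomp})$. My strategy is to show that $D$ is an $\ell$-subgroup of $\C{(w_{G}Z_{G})}$ containing the generating set $G^{\sharp}\cup\K{(w_{G}Z_{G})}$; since by Definition~\ref{d:PH} the group $\PH{(G)}$ is the smallest such $\ell$-subgroup, it then follows that $\PH{(G)}\seq D$, which is precisely the implication sought. That the generators lie in $D$ is immediate: $g^{\sharp}$ arises from the trivial partition $\{1_{w_{G}Z_{G}}\}$ with coefficient $g$, while a characteristic function $\chi$ arises from the partition $\{\chi,\,1_{w_{G}Z_{G}}-\chi\}$ (deleting a zero part if necessary) with coefficients $u$ and $0$, recalling that $u^{\sharp}=1_{w_{G}Z_{G}}$ because $\sharp$ is unit-preserving. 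The equivalence asserted in the statement, between partitions of unity taken in $\PH{(G)}$ and in $\C{(w_{G}Z_{G})}$, holds because the components of $1_{w_{G}Z_{G}}$ are exactly the $\{0,1\}$-valued continuous functions, all of which belong to $\K{(w_{G}Z_{G})}\seq\PH{(G)}$.

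The substantive part of this direction is closure of $D$ under the lattice-group operations, which I would handle uniformly by refining partitions. Given $e=\sum_{i}a_{i}^{\sharp}\chi_{i}$ and $f=\sum_{j}b_{j}^{\sharp}\psi_{j}$, the family of pointwise products $\{\chi_{i}\wedge\psi_{j}\}$, after deleting zero terms, is again a partition of unity, since $\sum_{i,j}\chi_{i}\psi_{j}=(\sum_{i}\chi_{i})(\sum_{j}\psi_{j})=1_{w_{G}Z_{G}}$ and distinct pieces are orthogonal. On the clopen support of $\chi_{i}\wedge\psi_{j}$, disjointness of the two original partitions forces $e=a_{i}^{\sharp}$ and $f=b_{j}^{\sharp}$ pointwise, so that any of $+,-,\vee,\wedge$ applied to $e,f$ agrees there with the same operation applied to $a_{i}^{\sharp},b_{j}^{\sharp}$. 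Since $\sharp$ is an $\ell$-homomorphism --- whence $(a_{i}+b_{j})^{\sharp}=a_{i}^{\sharp}+b_{j}^{\sharp}$, $(a_{i}\vee b_{j})^{\sharp}=a_{i}^{\sharp}\vee b_{j}^{\sharp}$, and likewise for $\wedge$ and for negation --- and since $G$ is itself closed under these operations, one reassembles, for instance, $e\vee f=\sum_{i,j}(a_{i}\vee b_{j})^{\sharp}(\chi_{i}\wedge\psi_{j})$, a decomposition of the required shape with coefficients again in $G$. Thus $D$ is closed under $+,-,\vee,\wedge$, and the argument is complete.

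I expect the only genuinely non-formal point to be $(\ref{i:decomp2})\Rightarrow(\ref{i:decomp1})$: because $\PH{(G)}$ is merely a lattice-group and not a ring, membership of the products $a_{i}^{\sharp}\chi_{i}$ in $\PH{(G)}$ is not automatic, and rests squarely on Lemma~\ref{l:products} together with the representation-theoretic identification of Lemma~\ref{l:separation}. The forward direction is structural; its one bookkeeping subtlety is to keep every coefficient inside $G$ --- rather than inside the larger group $\PH{(G)}$ --- throughout the refinement.
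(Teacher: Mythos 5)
Your proof is correct, and your backward direction (\ref{i:decomp2})$\Rightarrow$(\ref{i:decomp1}) coincides with the paper's, which simply cites Lemmas~\ref{l:separation} and~\ref{l:products}; you spell out why those lemmas apply to the $\U$-object $\PH{(G)}$, which is exactly the intended reading. The forward direction, however, follows a genuinely different route. The paper starts from an explicit normal form for elements of the generated $\ell$-subgroup: it writes $e=\bigwedge_{i}\bigvee_{j}\left(g_{ij}^{\sharp}+c_{ij}k_{ij}\right)$ with $g_{ij}\in G$, $c_{ij}\in\Z$, $k_{ij}\in\K{(w_{G}Z_{G})}$, refines the supports and complements of the finitely many $k_{ij}$ into a clopen partition $\{D_m\}$, and on each $D_m$ replaces every $k_{ij}$ by the constant $u$ or $0$ inside the term; since $\sharp$ is an $\ell$-homomorphism, the modified term equals $a_m^{\sharp}$ for some $a_m\in G$ and agrees with $e$ on $D_m$, giving (\ref{eq:decomp}) in one shot. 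You instead avoid the normal form altogether: you let $D$ be the set of all functions admitting a decomposition (\ref{eq:decomp}), check that $D$ contains the generators $G^{\sharp}\cup\K{(w_{G}Z_{G})}$, and prove closure under $+$, $-$, $\vee$, $\wedge$ by passing to the common refinement $\{\chi_{i}\wedge\psi_{j}\}$ and using that $\sharp$ preserves the operations, so that minimality of the generated $\ell$-subgroup yields $\PH{(G)}\seq D$. Both arguments rest on the same two facts --- $\sharp$ is an $\ell$-homomorphism, and characteristic functions refine into finite clopen partitions --- but yours trades the citation of the $\bigwedge\bigvee$ normal form for a more self-contained closure argument, at the cost of somewhat longer bookkeeping; the paper's substitution trick is shorter once the normal form is granted. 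Note also that your common-refinement device is precisely the one the paper deploys later, in the proof of Theorem~\ref{t:ph}, to show that the map $\varphi$ there is well defined, so nothing in your argument is foreign to the paper's toolkit.
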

\begin{proof}First, let us explicitly note that $\PH{(G)}$ and  $\C{(w_{G}Z_{G})}$ have  the same collection of partitions of unity because $\K{(w_{G}Z_{G})}\seq \PH{(G)}$. 

\smallskip \noindent (\ref{i:decomp1})$\Rightarrow$(\ref{i:decomp2})\
Recall that $\PH{(G)}$ is the $\ell$-subgroup of $\C{(w_{G}Z_{G})}$ generated by the set (\ref{eq:generators}). Hence by the elementary theory of lattice-groups we can write $e$ as
\[ 
\bigwedge_{i\in I} \bigvee_{j \in J} \left(g_{ij}^{\sharp} + c_{ij}k_{ij}\right), 
\]
where $I$ and $J$ are finite sets of indices, at least one of which is non-empty, $g_{ij} \in G$, $c_{ij} \in \Z$ and
$k_{ij} \in \K{(w_{G}Z_{G})}$.  Now for each $k_{ij}$,
we obtain associated clopen subsets of $w_{G}Z_{G}$, namely their supports
and their complements. This (necessarily non-empty) collection of clopens obviously covers $w_{G}Z_{G}$. It is elementary that we can refine this cover into a
finite partition $\{D_m\}_{m \in M}$ of clopens of the space  by taking
intersections and set-theoretic differences.

On each $D_m$, each $k_{ij}$ is constant --- either zero or one --- by construction. Let us define the element $\delta_{ij}^{m}\in G$ by setting
\begin{align}\label{eq:delta}
\delta_{ij}^{m}:=\begin{cases}u &\text{if $D_{m}\seq k_{ij}^{-1}(1)$},\\
0 &\text{if $D_{m}\seq k_{ij}^{-1}(0)$}.\end{cases}
\end{align}
Now consider the element of $\PH{(G)}$
\begin{align*} 
e_{m}:=\bigwedge_{i\in I} \bigvee_{j \in J} \left(g_{ij}^{\sharp} + c_{ij}(\delta_{ij}^{m})^\sharp\right).
\end{align*}
Observe that \emph{the function $e_{m}$ agrees over $D_{m}$ with the function $e$}, for each $m \in M$. This follows immediately from our definition of $\delta_{ij}^{m}$ in (\ref{eq:delta})  above.
Moreover, since $\sharp$ is an $\ell$-homomorphism, we have
\[ 
e_{m}=\left(\bigwedge_{i\in I} \bigvee_{j \in J} \left(g_{ij} + c_{ij}\delta_{ij}^{m}\right)\right)^{\sharp}=a_{m}^{\sharp},
\]
where  $a_{m}:=\bigwedge_{i\in I} \bigvee_{j \in J} \left(g_{ij} + c_{ij}\delta_{ij}^{m}\right)\in G$. Hence, if we let $\chi_m$ be the characteristic
function of $D_m$, we conclude 
\[ e = \sum_{m\in M} a_m^{\sharp}\chi_m,\]
as was to be shown.

\smallskip \noindent (\ref{i:decomp2})$\Rightarrow$(\ref{i:decomp1})\ This follows at once from Lemmas \ref{l:separation} and \ref{l:products}.

\end{proof}

 \section{Construction of the projectable hull}\label{s:ph}
Our final aim is to show that the embedding \textup{(\ref{eq:sharp})} provides a description of the projectable hull of $G$. This is  our main result:

\begin{theorem}\label{t:ph}
For any $\U$-object $(G,u)$, the embedding $\pi \colon G \hookrightarrow \PH{(G)}$ as in \textup{(\ref{eq:ph})} is the
projectable hull of $G$.
\end{theorem}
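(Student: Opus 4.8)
The plan is to verify the three requirements of Definition \ref{d:ph} for the extension $\pi\colon G\hookrightarrow\PH{(G)}$: that $\PH{(G)}$ is projectable, that $\pi$ is essential, and that $\PH{(G)}$ factors through any competing essential projectable extension. The running technical device is an \emph{extended orthogonality relation}: for each $g\in G$, writing $\chi_{g}\in\K{(w_{G}Z_{G})}$ for the characteristic function of the clopen determined by $\Ss_{m}{(g)}$, one has $(g^{\sharp})^{\perp}=\chi_{g}^{\perp}$ computed in $\PH{(G)}$. This is Claim \ref{c:orthogonal} transported from $Z_{G}$ to $w_{G}Z_{G}$: orthogonality of continuous functions is the pointwise condition $|f|\wedge|h|=0$, whose validity on the dense subspace $Z_{G}$ forces it everywhere, and the proof of Claim \ref{c:orthogonal} uses only continuity of the test function, so it applies to any $f$, not merely to elements of the space $V$ appearing there.

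Essentiality I verify through Lemma \ref{l:ess}(\ref{i:ess3}). Given $0<e\in\PH{(G)}$, density of $Z_{G}$ provides $\p_{0}\in Z_{G}$ with $c_{0}:=e(\p_{0})>0$, and since $\{\Ss_{m}{(h)}\}_{0<h\in G}$ is an open base for $Z_{G}$ I may pick $h>0$ with $\p_{0}\in\Ss_{m}{(h)}\seq\{e>c_{0}/2\}\cap Z_{G}$. By Remark \ref{r:zerosets}, $\Ss{(\mu(\widehat{h}))}=\lambda^{-1}(\Ss_{M}{(h)})\seq\Ss_{m}{(h)}$, so $h^{\sharp}$ vanishes off $\Ss_{m}{(h)}$ and is bounded there by $||h||_{u}$; hence $0\leq h^{\sharp}\leq n\,e$ on $Z_{G}$ once $n\geq 2||h||_{u}/c_{0}$, an inequality that persists on all of $w_{G}Z_{G}$ by continuity and density. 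As $h>0$ forces $\Ss_{M}{(h)}\neq\varnothing$ and hence $h^{\sharp}\neq 0$, condition (\ref{i:ess3}) is met. For projectability I fix $e\in\PH{(G)}$, write $e=\sum_{i=1}^{l}a_{i}^{\sharp}\chi_{i}$ by Theorem \ref{t:decomposition} for a clopen partition $\{D_{i}\}$, and set $\theta:=\bigvee_{i}(\chi_{i}\wedge\chi_{a_{i}})\in\K{(w_{G}Z_{G})}$. Applying the orthogonality relation over each block $D_{i}$ yields $e^{\perp}=\theta^{\perp}$, hence $e^{\pperp}=\theta^{\pperp}$; since $\theta$ is a component of the unit $1_{w_{G}Z_{G}}$ it induces a splitting $\PH{(G)}\cong\theta^{\perp}\times\theta^{\pperp}=e^{\perp}\times e^{\pperp}$, so every principal polar is a projection band and $\PH{(G)}$ is projectable.

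The universal property is the heart of the argument. Let $\iota\colon G\hookrightarrow H$ be an essential extension with $H$ projectable. By Lemma \ref{l:ess}(\ref{i:ess2}) the Boolean algebras of polars of $H$ and of $G$ are isomorphic, and projectability of $H$ makes each principal polar $\iota(g)^{\pperp_{H}}$ a projection band, with a unique associated component $\theta_{g}$ of the unit of $H$. Using the identities (\ref{e:ppolmeet})--(\ref{e:ppoljoin}) together with $\VV_{m}{(g)}\cap\VV_{m}{(k)}=\VV_{m}{(|g|\vee|k|)}$ and $\VV_{m}{(g)}\cup\VV_{m}{(k)}=\VV_{m}{(|g|\wedge|k|)}$, the assignment $\VV_{m}{(g)}\mapsto\iota(g)^{\perp_{H}}$ preserves meets, joins and inclusions; since every relation among the $\VV_{m}{(g)}$ in $\Ba{(Z_{G})}$ reduces to inclusions of the form $\VV_{m}{(g)}\seq\VV_{m}{(k_{1})}\cup\cdots\cup\VV_{m}{(k_{r})}$, which transfer to polar inclusions in $H$ by essentiality, the map extends to a Boolean homomorphism $\Lambda$ from $\Ba{(Z_{G})}\cong\Cp{(w_{G}Z_{G})}$ into the projection bands of $H$, injective again by essentiality. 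I then define $\varphi\colon\PH{(G)}\to H$ on the decomposition of Theorem \ref{t:decomposition} by $\varphi\left(\sum_{i}a_{i}^{\sharp}\chi_{i}\right)=\sum_{i}\iota(a_{i})\,\theta_{D_{i}}$, where $\theta_{D_{i}}$ is the component of the unit of $H$ corresponding to $\Lambda(D_{i})$ and the products are formed in $H$ as in Lemma \ref{l:products}. The trivial partition gives $\varphi(g^{\sharp})=\iota(g)$, so $\varphi\circ\pi=\iota$, and $\varphi$ is forced to be unique because $G^{\sharp}\cup\K{(w_{G}Z_{G})}$ generates $\PH{(G)}$ while projectability of $H$ pins down each $\theta_{D_{i}}$.

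I expect the main obstacle to be showing that $\varphi$ is a well-defined injective $\ell$-homomorphism, that is, the compatibility of the ``$G$-part'' $a_{i}^{\sharp}$ with the ``Boolean part'' $\chi_{i}$. Concretely, if $a^{\sharp}$ and $b^{\sharp}$ agree on a clopen block $F$, one must prove that $\iota(a)$ and $\iota(b)$ agree on the band $\Lambda(F)$, i.e.\ that $\iota(a-b)\in\Lambda(F)^{\perp}$; this is precisely where the extended orthogonality relation and the polar isomorphism of Lemma \ref{l:ess}(\ref{i:ess2}) must be combined, converting the support statement ``$(a-b)^{\sharp}$ vanishes on $F$'' on $w_{G}Z_{G}$ into a polar membership in $H$, with injectivity following by the reverse translation through the injectivity of $\Lambda$. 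A perhaps cleaner route to the same crux is to dualise $\Lambda$ via Stone duality to a continuous surjection $q\colon X_{H}\twoheadrightarrow w_{G}Z_{G}$ and set $\varphi(\cdot)=(\cdot)\circ q$; then $\varphi$ is automatically an $\ell$-homomorphism and is injective because $q$ has dense image, and the only remaining point---again the essential difficulty---is to verify the compatibility $g^{\sharp}\circ q=\widehat{\iota(g)}$ identifying the pulled-back representation of $G$ with its Yosida representation inside $H$.
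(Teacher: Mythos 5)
Your proposal follows the same three-part architecture as the paper's proof: projectability of $\PH{(G)}$, essentiality of $\pi$ checked against Lemma \ref{l:ess}(\ref{i:ess3}), and the universal property via a map $\varphi$ sending $a^{\sharp}$ to $\iota(a)$ and characteristic functions to components of the unit of $H$ supplied by projectability of $H$ together with the polar isomorphism of Lemma \ref{l:ess}(\ref{i:ess2}). Your first two parts are correct, and in places tidier than the paper's: for essentiality the paper also verifies condition (\ref{i:ess3}), but does so through the decomposition of Theorem \ref{t:decomposition}, whereas your density-at-a-point argument is direct; for projectability the paper argues by induction on generation (Claim \ref{c:projectable}), whereas you exhibit an explicit component $\theta=\bigvee_{i}(\chi_{i}\wedge\chi_{a_{i}})$ with $e^{\perp}=\theta^{\perp}$, a genuine simplification. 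The extended orthogonality relation $(g^{\sharp})^{\perp}=\chi_{g}^{\perp}$ in $\PH{(G)}$ that you use throughout is likewise asserted and used by the paper.

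The gap is in the universal property, at exactly the point you yourself call ``the main obstacle'': the claim that if $a^{\sharp}$ and $b^{\sharp}$ agree on a clopen $F$ then $\iota(a)$ and $\iota(b)$ agree on the band $\Lambda(F)$ is never proved --- you only state that the extended orthogonality relation and Lemma \ref{l:ess}(\ref{i:ess2}) ``must be combined''. Without this, $\varphi$ is not known to be well defined on overlapping decompositions of the same element (Theorem \ref{t:decomposition} provides decompositions, not canonical ones), let alone an injective $\ell$-homomorphism making the triangle commute. The step is not routine: for a generator $F=\Ss_{m}{(g)}$ the translation is immediate from $(g^{\sharp})^{\perp}=\chi_{g}^{\perp}$ together with the essentiality fact $\iota(g)^{\perp_{H}}=\iota(g^{\pperp_{G}})^{\perp_{H}}$, but for a general $F\in\Ba{(Z_{G})}$ the band $\Lambda(F)$ is defined only through the abstract Boolean extension, so one must prove that $\Lambda$ coincides with the composite of the two essentiality isomorphisms $\Pol{\PH{(G)}}\to\Pol{G}\to\Pol{H}$ applied to $\chi_{F}^{\pperp}$, where $\chi_{F}$ is the characteristic function of $F$. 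This is precisely what the paper establishes, by induction on the Boolean structure of $\chi$ (the claim following Claim \ref{c:phihom}: base case $\chi_{g}$, then complement, then meet), and it is the longest and most delicate portion of its proof. The statement is true and your sketch is completable --- for instance, both $F\mapsto\Lambda(F)$ and the composite map $F\mapsto\chi_{F}^{\pperp}\mapsto\Pol{G}\mapsto\Pol{H}$ are Boolean homomorphisms on $\Cp{(w_{G}Z_{G})}$ agreeing on the generators $\Ss_{m}{(g)}$, hence everywhere --- but that argument (or the paper's induction) is the crux of the theorem and is absent from your proposal. The same hole reappears in your Stone-duality variant, where, as you concede, the compatibility $g^{\sharp}\circ q=\widehat{\iota(g)}$ is left unverified.
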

\begin{proof} The proof that $\PH{(G)}$ is projectable is identical to that of Claim
\ref{c:projectable}.

To prove that the map $\pi$ is an essential extension, we verify (\ref{i:ess3}) in Lemma \ref{l:ess}. Pick $0<e\in\PH{(G)}$, and express it as $e=\sum_{i=1}^{l}a_{i}^{\sharp}\chi_{i}$ by Theorem \ref{t:decomposition}, for a partition of unity $\{\chi_{i}\}_{i=1}^{l}$ in $\PH{(G)}$ and elements $\{a_{i}\}_{i=1}^{l}$ in $G$. Since $e>0$, we must have $a_{i}^{\sharp}\chi_{i}\geq 0$ for each $i$, and $a_{i_{0}}^{\sharp}\chi_{i_{0}}> 0$ for some $i_{0}$. It is enough to show that there is $h\in G$ such that $0<h^{\sharp}\leq n a_{i_{0}}^{\sharp}\chi_{i_{0}}$, for then $0<h^{\sharp}\leq ne$ follows easily.  By \eqref{e:charfunctions}, we can identify the characteristic functions in $\K{(w_{G}Z_{G})}$ with the elements of $\K{(Z_{G})}$.
Set $a:=a_{i_{0}}$ and $\chi:=\chi_{i_{0}}\land\chi_{a}$, where $\chi_{a}\in\K{(Z_{G})}$ is the characteristic function of $\Ss_{m}(a)$.
Since the support $\Ss{(\chi)}=\chi^{-1}(1)$ is clopen and nonempty (because $a^{\sharp}\chi\neq0$), there exists $0<g\in G$ such that the basic open set $\Ss_{m}{(g)}$ is nonempty and contained in $\Ss{(\chi)}\seq\Ss_{m}{(a)}$. Hence $h:=g\land a>0$, and $\Ss{(h^{\sharp})}\seq\Ss_{m}{(h)}\seq\Ss{(\chi)}$. 
We now claim that $h^{\sharp}\leq a^{\sharp}\chi$. 
It is enough to prove that the inequality holds for a point $x\in w_{G}Z_{G}$ in the support of $\chi$, where $\chi(x)=1$. Since $\sharp$ is an $\ell$-homomorphism, $h^{\sharp}=a^{\sharp}\wedge g^{\sharp}>0$. If $a^{\sharp}(x)\leq g^{\sharp}(x)$, then $h^{\sharp}(x)=a^{\sharp}(x)$, and the inequality holds. Otherwise, we have $h^{\sharp}(x)=g^{\sharp}(x)< a^{\sharp}(x)$. This settles the claim and
completes the proof that $\pi$ is essential.

To show $\PH{(G)}$ is a hull, it suffices to show that given the (unital)
essential embedding $\iota$ into $H$ there exists an (automatically essential and unital)
embedding $\varphi$ making the diagram below commute:
\[
 \begin{tikzcd}\tag{*}\label{d:phi}
(G,u) \arrow[hookrightarrow]{r}{\pi}
   \arrow[hookrightarrow]{dr}{\iota}& 
   (\PH{(G)},u^{\sharp}) \arrow[hookrightarrow,dashed]{d}{\varphi} \\ 
   &(H,v)
 \end{tikzcd}
\]
We  define a function $\varphi\colon \PH{(G)}\to H$ as follows. First we set
\begin{align}\label{eq:phionG}
\varphi(a^{\sharp}):=\iota(a), \text{ for each } a \in G.
\end{align}
Since $H$ is projectable, for each $g\in G$ there is a unique component $\sigma_{g}\in H$ of the unit $v$ that satisfies $\iota(g)^{\perp\perp}=\sigma_{g}^{\perp\perp}$,
where $\perp$ is computed in $H$. By \eqref{e:charfunctions}, we identify the components of the unit $u^{\sharp}$ in $\PH{(G)}$ with the characteristic functions in $\K{(Z_{G})}$. Let $\chi_{g}\in\PH{(G)}$ be the characteristic function of $\Ss_{m}(g)$ for $g\in G$. The analogous result of Claim \ref{c:orthogonal} for $\PH{(G)}$ ensures that $\chi_{g}^{\perp}=\pi(g)^{\perp}$ in $\PH{(G)}$. We set
\begin{equation}\label{eq:phionchig}
 \varphi(\chi_{g}):= \sigma_{g}.
\end{equation}
Let $\K{(H)}$ be the Boolean algebra of components of the unit $v$ in $H$.
\begin{claim}\label{c:phihom}
 The map $\varphi\colon\{\chi_{g}\}_{g\in G}\to\K{(H)}$ defined as in \eqref{eq:phionchig} is a lattice homomorphism.
\end{claim}
\begin{proof}
 Since $\iota$ is a unital homomorphism, trivially $\varphi(\chi_{0})=0$ and $\varphi(\chi_{u})=v$. Let $g_{1},g_{2}\in G$. It is easy to check that $\chi_{g_{1}}\land\chi_{g_{2}}=\chi_{g_{1}\land g_{2}}$ and $\chi_{g_{1}}\vee\chi_{g_{2}}=\chi_{g_{1}\vee g_{2}}$. Let $\varphi(\chi_{g_{1}})=\sigma_{g_{1}}$, $\varphi(\chi_{g_{2}})=\sigma_{g_{2}}$, $\varphi(\chi_{g_{1}\land g_{2}})=\sigma_{g_{1}\land g_{2}}$ and $\varphi(\chi_{g_{1}\vee g_{2}})=\sigma_{g_{1}\vee g_{2}}$ as in \eqref{eq:phionchig}. Hence, by \eqref{e:ppolmeet} and \eqref{e:ppoljoin},
 \begin{align*}
  \sigma_{g_{1}\land g_{2}}^{\pperp}&=\iota(g_{1}\land g_{2})^{\pperp} = (\iota(g_{1})\land \iota(g_{2}))^{\pperp} =\\
  &= \iota(g_{1})^{\pperp}\cap \iota(g_{2})^{\pperp} = \sigma_{g_{1}}^{\pperp}\cap \sigma_{g_{2}}^{\pperp}= (\sigma_{g_{1}}\land\sigma_{g_{2}})^{\pperp},
 \end{align*}
 and
 \begin{align*}
  \sigma_{g_{1}\vee g_{2}}^{\pperp}&=\iota(g_{1}\vee g_{2})^{\pperp} = (\iota(g_{1})\vee \iota(g_{2}))^{\pperp} =\\
  &= (\iota(g_{1})^{\pperp}\cup \iota(g_{2})^{\pperp})^{\pperp} = (\sigma_{g_{1}}^{\pperp}\cup\sigma_{g_{2}}^{\pperp})^{\pperp} = (\sigma_{g_{1}}\vee\sigma_{g_{2}})^{\pperp}.
 \end{align*}
This ensures $\sigma_{g_{1}\land g_{2}}=\sigma_{g_{1}}\land\sigma_{g_{2}}$ and $\sigma_{g_{1}\vee g_{2}}=\sigma_{g_{1}}\vee\sigma_{g_{2}}$. Therefore $\varphi(\chi_{g_{1}}\land\chi_{g_{2}})=\varphi(\chi_{g_{1}\land g_{2}})=\varphi(\chi_{g_{1}})\land\varphi(\chi_{g_{2}})$ and $\varphi(\chi_{g_{1}}\vee\chi_{g_{2}})=\varphi(\chi_{g_{1}\vee g_{2}})=\varphi(\chi_{g_{1}})\vee\varphi(\chi_{g_{2}})$, and the claim is proved.  
\end{proof}
\noindent
The previous Claim \ref{c:phihom}, together with \cite[Lemma 1 in Section V.4]{BalbesDwinger74}, ensures that $\varphi$  extends  uniquely to a homomorphism $\varphi\colon \K{(Z_{G})}\to\K{(H)}$ of Boolean algebras. Specifically, since the Boolean algebra $\K{(Z_{G})}$ can be generated by the set $\{\chi_{g}\}_{g\in G}$ using the negation $u^{\sharp}-$ and the meet operation $\land$, the image of a general element of $\K{(Z_{G})}$ under $\varphi$ can be inductively described as follows. Let $\chi_{1}, \chi_{2}, \alpha, \beta_{1}, \beta_{2}\in \K{(Z_{G})}$ such that $\chi_{1} = u^{\sharp}-\alpha$ and $\chi_{2}=\beta_{1}\land\beta_{2}$. Then
\begin{equation}\label{eq:phionK}
 \varphi(\chi_{1})=v-\varphi(\alpha)\quad\text{and}\quad
 \varphi(\chi_{2})=\varphi(\beta_{1})\land\varphi(\beta_{2}).
\end{equation}
\noindent
Finally, for a general $e \in \PH{(G)}$, we first write $e$ as in (\ref{eq:decomp}) using Theorem \ref{t:decomposition}, and then, using (\ref{eq:phionG}--\ref{eq:phionK}), we set
\begin{align}\label{eq:phigeneral}
\varphi(e):=\sum_{i=1}^{l}\varphi(a_{i}^{\sharp})\varphi(\chi_{i}).
\end{align}
Since each product $\varphi(a_{i}^{\sharp})\varphi(\chi_{i})$  is an element of $H$ by Lemma \ref{l:products}, $\varphi(e)$ as in (\ref{eq:phigeneral}) is an element of $H$.

We next verify that $\varphi$  is a well-defined function. Given a decomposition  (\ref{eq:decomp}) of $e\in \PH{(G)}$ as in Theorem \ref{t:decomposition}, suppose $e=\sum_{j=1}^{t}b_{j}^{\sharp}\xi_{j}$ is another such decomposition. It suffices to show that 
\begin{align}\label{eq:show}
\sum_{i=1}^{l}\varphi(a_{i}^{\sharp})\varphi(\chi_{i})=\sum_{j=1}^{t}\varphi(b_{j}^{\sharp})\varphi(\xi_{j}).
\end{align}
It is elementary to verify that the set $\{\chi_{i}\wedge \xi_{j}\mid \chi_{i}\wedge \xi_{j}\neq 0 \}$
forms a partition of unity that refines both $\{\chi_{i}\}_{i=1}^{l}$
and $\{\xi_{j}\}_{j=1}^{t}$; that is,  each $\chi_i$ and $\xi_j$ is a sum (or join, by pairwise disjointness) of elements $\chi_{i'}\wedge \xi_{j'}\neq 0$.
It follows that $e$ can  be expressed
in two ways as
\begin{align*}
e=\sum a_{i}^{\sharp}(\chi_{i}\wedge \xi_{j})=\sum b_{j}^{\sharp}(\chi_{i}\wedge \xi_{j}),
\end{align*}
and hence $a_{i}^{\sharp}(\chi_{i}\wedge \xi_{j})=b_{j}^{\sharp}(\chi_{i}\wedge \xi_{j})$ for all $i,j$.

\noindent
In the simplest case, we must prove:
\begin{claim} 
Assuming $\chi\in\K{(Z_{G})}$,  suppose $a^{\sharp}\chi =
b^{\sharp}\chi$. Then $\varphi(a^{\sharp}\chi) = \iota(a)\varphi(\chi)=\iota(b)\varphi(\chi) = \varphi(b^{\sharp}\chi)$.
\end{claim}
\begin{proof}
Let $\nu_{\PH{(G)}}\colon\Pol{\PH{(G)}}\to\Pol{\pi(G)}$ and $\nu_{H}^{-1}\colon\Pol{\iota(G)}\to\Pol{H}$ be the isomorphisms of Boolean algebras given in Lemma \ref{l:ess}. Then also $\nu_{\pi}:=\pi^{-1}\circ\nu_{\PH{(G)}}\colon\Pol{\PH{(G)}}\to\Pol{G}$ and $\nu_{\iota}:=\nu_{H}^{-1}\circ\iota\colon\Pol{G}\to\Pol{H}$ are isomorphisms of Boolean algebras. Since $\chi^{\perp_{\PH{(G)}}}$ is a
polar, there exists a unique polar $P_{\chi}\in\Pol{G}$ such that $P_{\chi}^{\perp_{G}}=\nu_{\pi}(\chi^{\perp_{\PH{(G)}}})$.
From $a^{\sharp}\chi =
b^{\sharp}\chi$, we have $\pi(a-b) = (a^{\sharp} -
b^{\sharp}) \in \chi^{\perp_{\PH{(G)}}}$. Hence $a-b \in
P_{\chi}^{\perp_G}$ and $\iota(a)-\iota(b)=\iota(a-b) \in \iota(P_{\chi})^{\perp_{\iota(G)}} =
\iota(P_{\chi})^{\perp_H} \cap \iota(G)$. 
To complete the proof we show $\iota(P_{\chi})^{\perp_H}=\varphi(\chi)^{\perp_H}$, whence $(\iota(a)-\iota(b))\in\varphi(\chi)^{\perp_H}$ and $\iota(a)\varphi(\chi)=\iota(b)\varphi(\chi)$. \\
We proceed by induction on the structure of $\chi$ using \eqref{eq:phionK}. 
In the basic case we suppose $\chi:=\chi_{g}$, for $g\in G$. 
Then $\chi_{g}^{\perp_{\PH{(G)}}}=\pi(g)^{\perp_{\PH{(G)}}}$ by Claim \ref{c:orthogonal}, whence $P_{\chi}^{\perp_{G}} = \nu_{\pi}(\pi(g)^{\perp_{\PH{(G)}}}) = \pi^{-1}(\nu_{\PH{(G)}}(\pi(g)^{\perp_{\PH{(G)}}})) = \pi^{-1}(\pi(g)^{\perp_{\PH{(G)}}}\cap\pi(G)) = g^{\perp_{G}} = (g^{\pperp_{G}})^{\perp_{G}}$, and $P_{\chi} = g^{\pperp_{G}}$. 
By definition of $\varphi$ on $\{\chi_{g}\}_{g\in G}$, we have $\varphi(\chi)^{\perp_{H}} = \iota(g)^{\perp_{H}} = \iota(g^{\pperp_{G}})^{\perp_{H}}$, where the last equality is a direct consequence of the essentiality of $\iota$ (see \cite[\S 2]{Conrad73}). \\
We now consider the case $\chi := u^{\sharp} - \alpha$, and suppose $\iota(P_{\alpha})^{\perp_H}=\varphi(\alpha)^{\perp_H}$. 
Since $\chi$ and $\alpha$ are components of unity, $\chi^{\perp_{\PH{(G)}}} = \alpha^{\pperp_{\PH{(G)}}}$, whence $P_{\chi}^{\perp_{G}} = P_{\alpha}^{\pperp_{G}}$ and $\iota(P_{\chi})^{\perp_{H}} = \iota(P_{\alpha})^{\pperp_{H}}$, because both $\nu_{\pi}$ and $\nu_{\iota}$ are  isomorphisms of Boolean algebras. Since also $\varphi\colon\K{(Z_{G})}\to\K{(H)}$ is an isomorphism of Boolean algebras, $\varphi(\chi)=v-\varphi(\alpha)$. 
Therefore, $\iota(P_{\chi})^{\perp_H}=\iota(P_{\alpha})^{\pperp_H}=\varphi(\alpha)^{\pperp_H}=\varphi(\chi)^{\perp_H}$. \\
The case $\chi:=\beta_{1}\land\beta_{2}$ is proved in a similar way. Suppose $\iota(P_{\beta_{1}})^{\perp_H}=\varphi(\beta_{1})^{\perp_H}$ and $\iota(P_{\beta_{2}})^{\perp_H}=\varphi(\beta_{2})^{\perp_H}$. 
By \eqref{e:ppolmeet}, $\chi^{\pperp_{\PH{(H)}}}=(\beta_{1}\land\beta_{2})^{\pperp_{\PH{(H)}}}=\beta_{1}^{\pperp_{\PH{(H)}}}\cap\beta_{2}^{\pperp_{\PH{(H)}}}$, whence $\chi^{\perp_{\PH{(H)}}} = (\beta_{1}^{\pperp_{\PH{(H)}}}\cap\beta_{2}^{\pperp_{\PH{(H)}}})^{\perp_{\PH{(H)}}} = \beta_{1}^{\pperp_{\PH{(H)}}}\vee\beta_{2}^{\pperp_{\PH{(H)}}}$. As a consequence, $P_{\chi}^{\perp_{\PH{(H)}}} = P_{\beta_{1}}^{\perp_{\PH{(H)}}} \vee P_{\beta_{2}}^{\perp_{\PH{(H)}}}$ and $\iota(P_{\chi})^{\perp_{H}} = \iota(P_{\beta_{1}})^{\perp_{H}} \vee \iota(P_{\beta_{2}})^{\perp_{H}}$. 
Since $\varphi(\chi) = \varphi(\beta_{1})\land\varphi(\beta_{2})$, we have $\varphi(\chi)^{\pperp_{\PH{(H)}}} = \varphi(\beta_{1})^{\pperp_{\PH{(H)}}}\cap\varphi(\beta_{2})^{\pperp_{\PH{(H)}}}$, whence $\varphi(\chi)^{\perp_{\PH{(H)}}} = \varphi(\beta_{1})^{\perp_{\PH{(H)}}}\vee\varphi(\beta_{2})^{\perp_{\PH{(H)}}} = \iota(P_{\beta_{1}})^{\perp_{H}} \vee \iota(P_{\beta_{2}})^{\perp{H}} = \iota(P_{\chi})^{\perp_{H}}$. The claim is settled.
\end{proof}
Since $\varphi\colon \K{(Z_{G})}\to\K{(H)}$ is a homomorphism of Boolean algebras, $\varphi$ preserves
partitions of unity and $\varphi(\chi \wedge \xi) = \varphi(\chi) \wedge
\varphi(\xi)$. Therefore
\begin{align*}
\sum \varphi(a_{i}^{\sharp})\varphi(\chi_{i}\wedge \xi_{j})&=\sum \varphi(b_{j}^{\sharp})\varphi(\chi_{i}\wedge \xi_{j}), \\
\sum_j \varphi(a_{i}^{\sharp})\varphi(\chi_{i}\wedge \xi_{j}) &= \varphi(a_{i}^{\sharp})\varphi(\chi_{i}) \mbox{ for all  $i$'s,} \\
\sum_i \varphi(b_{j}^{\sharp})\varphi(\chi_{i}\wedge \xi_{j}) &= \varphi(b_{j}^{\sharp})\varphi(\xi_{j}) \mbox{ for all $j$'s.} 
\end{align*}

These together prove (\ref{eq:show}).

The map $\varphi$ makes the diagram (\ref{d:phi}) commute by
construction. To show that it is  an $\ell$-homomorphism one argues as follows. Given $e+f \in \PH{(G)}$, to prove $\varphi(e + f) =
\varphi(e) + \varphi(f)$, we first take decompositions of $e+f$, $e$ and
$f$ as in (\ref{eq:decomp}) of Theorem \ref{t:decomposition}. We then pick a joint refinement of the three partitions of unity involved.  We finally proceed as in the preceding argument   that shows  $\varphi$ is well-defined.
We omit the  elementary  details. The argument for the remaining operations is analogous.

To show $\varphi$ is  injective, consider $e \neq f \in
\PH{(G)}$. Using again decompositions as in (\ref{eq:decomp}) of  Theorem \ref{t:decomposition}, and a
common refinement of the associated partitions, we see that $e$ and
$f$ must differ on some element of the common refinement. Injectivity
of $\varphi$ then follows at once from the injectivity of $\iota$ and $\pi$.
\end{proof}
\section{Examples}\label{s:ex}
We close the paper with two examples. Example \ref{ex:parab} shows that the space $Z_{G}$ of minimal primes can fail to be compact even though its $G$-indexed compactification $w_{G}Z_{G}$ is actually $\beta_{0}Z_{G}$, the largest zero-dimensional compactification of $Z_{G}$. This happens when the base of clopens of $Z_{G}$ indexed by elements of $G$ is not a Boolean algebra --- because it fails to be closed under complements --- and yet it is large enough that the Boolean algebra it generates consists of all clopens of $Z_{G}$.  We note in passing that the $\ell$-group in this first example can be shown to be finitely generated.  Example \ref{ex:ref}, by contrast, is an instance where the $G$-indexed compactification $w_{G}Z_{G}$ is strictly smaller than $\beta_{0}Z_{G}$. Here $G$ is of the form $\C({X)}$ for $X$ a compact Hausdorff space. 
\begin{example}\label{ex:parab}
Let $\mathscr{M}$ be the set of continuous and piecewise linear functions $f\colon[0,1]^{2}\to\R$ with integer coefficients. This means that $f$ is continuous, and that there are finitely many triplets of integers $(a_{i},b_{i}, c_{i}) \in \Z^{3}$ such that for all $(x,y)\in[0,1]^{2}$ we have  $f(x,y) = a_{i}x+b_{i}y+c_{i}$ for some $i$. When equipped with the pointwise operations of minimum, maximum, and addition, $\mathscr{M}$ is an $\ell$-group. The function identically equal to $1$ is a strong unit.

 Let
 \begin{equation*}
P:=\{(x,y)\in[0,1]^{2}\mid y=x^{2}\}\cup\{(x,y)\in[0,1]^{2}\mid y=0\},
 \end{equation*}
 and let $(G,1_{P})$ be the unital $\ell$-group obtained by restricting each element of $\mathscr{M}$ to ${P}$, where $1_{P}\colon P \to \R$ is the function constantly equal to $1$. We now show that
  $Z_{G}:=\MinS{G}$ fails to be compact. For this,  consider the projection function $\pi_{y}\colon(x,y)\mapsto y$, along with  the set of functions $h_{n}$ ($0<n\in\N$) which are equal to $nx-1$ on the segment $\{(x,y)\in[0,1]^2\mid y=0\text{ and }x\in[\frac{1}{n},1]\}$ and to $0$ elsewhere. These are elements of $G$.
  Then $\{\Ss_{m}(\pi_{y})\}\cup\{\Ss_{m}(h_{n})\}_{n>0}$ is an open cover of $Z_{G}$ without a finite subcover. According to Lemma \ref{l:conmar}, $G$ is not complemented. Indeed, by piecewise linearity and continuity, there can be no $f\in \mathscr{M}$ that restricts to $P$ so as to satisfy $\pi_{y}^{\pperp}=f^{\perp}$. By the same lemma, $\{\VV_{m}(g)\}_{g\in G}$ 
  is not a Boolean algebra. Indeed,  it does not contain the complement $\Ss_{m}(\pi_{y})$ of $\VV_{m}(\pi_{y})$. In particular, $\{\VV_{m}(g)\}_{g\in G}$ is not the collection of all clopens of $Z_{G}$, because $\Ss_{m}(\pi_{y})$ is clopen --- it is both a 
  basic open set and the intersection of the 
  closed sets in $\{\VV_{m}(h_{n})\}_{n>0}$. However, the Boolean algebra $\Ba{(Z_G)}$ generated by $\{\VV_{m}(g)\}_{g\in G}$ is the collection of \emph{all} clopens of $Z_{G}$, as can be seen by direct inspection. Therefore $\MaxS{\PH{(G)}}\cong w_{G}Z_{G}$ is homeomorphic to $\beta_{0}Z_{G}$, the maximal zero-dimensional compactification of $Z_{G}$.
\end{example}

\begin{example}\label{ex:ref}
Let $D$ be the discrete space with $|D| =
\aleph_1$, let $\alpha D = D \cup \{\alpha\}$ be the one-point
compactification of $D$, and let $G = \C{(\alpha D)}$ so that $\MaxS{G} \cong \alpha
D$.  Then $Z_{G}:=\MinS{G}$ can be identified with the collection of fixed ultrafilters on $D$
together with the free ultrafilters on $D$ which contain a countable
subset.
The basic closed sets of $Z_{G}$, of the form
\[\{\mathscr{U} \in Z_{G} \mid g^{-1}(0)\in \mathscr{U}\},\]
are actually clopen.  We show that the Boolean algebra
generated by these basic clopen sets is {\em not} the
Boolean algebra of all clopens, but a proper subalgebra. For, let
$D_1$ and $D_2$ be elements of a partition of $D$ with $|D_i| = \aleph_1$, $i=1,2$. Then we have an induced partition of $Z_{G}$ into two clopens:
\[
Z_{G} = \{\mathscr{U} \mid \exists C \in \mathscr{U} \mbox{ with }
C \subseteq D_1\} \biguplus \{\mathscr{U} \mid \exists C \in \mathscr{U} 
\mbox{ with } C \subseteq D_2\}.
\] 
Neither summand in the partition above is a
basic closed set of $Z_{G}$. Therefore $\MaxS{\PH{(G)}}\cong
w_{G}Z_{G} \not\cong \beta_0 Z_{G}$.
\end{example}
\begin{remark}\label{r:c(x)} In  case $G=\C{(X)}$ for some  compact Hausdorff space $X$, the main result of this paper  is related to several others  in the literature, and especially to the construction of zero-dimensional covers of $X$. Postponing fuller accounts to further research, we mention at least the following important 
connection. In \cite{Vermeer84}, Vermeer constructs the 
basically disconnected  cover of $X$ (in the more general case of  a completely regular Hausdorff space) through an inverse limit \cite[Theorem 4.4]{Vermeer84}. The inverse limit is defined by means of an auxiliary space  $\Lambda_1\,(X)$ --- itself a cover of $X$ --- for which see in particular \cite[Theorem 3.5]{Vermeer84}. Now, the 
results by Hager et al.\ in \cite[Section 3]{HagerKimberMcGovern03} entail amongst other things that, in our notation,  there is a  homeomorphism $\MaxS{\PH{(\C{(X)})}}\cong \Lambda_1(X)$. By   Lemma \ref{l:separation} we conclude
 $\Lambda_1\,(X) \cong w_{\C{(X)}}Z_{\C{(X)}}$. We thus obtain a description of Vermeer's cover $\Lambda_1\,(X)$ as the Wallman compactification of the \emph{minimal} spectral space of $\C{(X)}$ described in (\ref{e:wG}--\ref{e:charfunctions}) above. Contrast Vermeer's \cite[Theorem 3.5]{Vermeer84}, which describes  $\Lambda_1\,(X)$ using regularised  zero sets in  the \emph{maximal} spectral space $X$ of $\C{(X)}$. Further,  the space $Z_{\C{(X)}}$ is compact if, and only if, $X$ is \emph{cozero-complemented}, i.e.\ the lattice of cozero sets of $X$ is complemented; this is well-known and also follows  from Lemma \ref{l:conmar}. In this 
 case we  have $w_{\C{(X)}}Z_{\C{(X)}}\cong Z_{\C{(X)}}$.  
 Consequently, $\Lambda_1\,(X)\cong Z_{\C{(X)}}$ --- when $X$ is cozero-complemented, Vermeer's  $\Lambda_1\,(X)$ is the minimal spectral space of $\C{(X)}$, and further work  reveals Vermeer's cover $\Lambda_1\,(X)\to X$ in \cite{Vermeer84} as the ``push-up'' map $\lambda\colon Z_{\C{(X)}}\to X$ in  
 (\ref{eq:lambda}--\ref{eq:action}).
\end{remark}
\part*{Acknowledgements.}
\noindent We gratefully acknowledge  partial support by the Italian FIRB ``Futuro in Ricerca'' grant RBFR10DGUA. The grant partially supported a visit of R. N. Ball to the Universit\`a degli Studi di Milano, Italy, during which the fundamental ideas of the present paper were developed. 
We would also like to express our thanks to an anonymous referee for detecting a serious blunder in an earlier version of this paper, and for providing us with  Example \ref{ex:ref}. The same referee pointed out to us the relevance of  \cite{Vermeer84} and \cite{HagerKimberMcGovern03} for our results (cf.\ Remark \ref{r:c(x)}).
 Finally, we are grateful to S. J. van Gool for pointing out to us that the use of  the standard extension result  \cite[Lemma 1 in Section V.4]{BalbesDwinger74} could shorten the proof of Theorem \ref{t:ph}.

\bibliographystyle{plain}
\bibliography{biblio}

\end{document}